\numberwithin{equation}{section}
\newtheorem{thm}{Theorem}[section]
\newtheorem*{thm*}{Theorem}
\newtheorem*{thmmain*}{MAIN THEOREM}
\newtheorem{lem}[thm]{Lemma}
\newtheorem{cor}[thm]{Corollary}
\newtheorem{prop}[thm]{Proposition}
\newtheorem*{prop*}{Proposition}
\theoremstyle{definition}
\newtheorem{defn}{Definition}[section]
\theoremstyle{remark}
\newtheorem{rem}{Remark}[section]
\newtheorem{ex}[rem]{Example}
\newtheorem{quest}[rem]{Question}
\newcommand{\tref}[1]{Theorem~\ref{#1}}
\newcommand{\cref}[1]{Corollary~\ref{#1}}
\newcommand{\pref}[1]{Proposition~\ref{#1}}
\newcommand{\lref}[1]{Lemma~\ref{#1}}
\def\R{\mathds{R}}
\def\dim{\mathop{\text{dim}}}
\def\codim{\mathop{\text{codim}}}
\def\Id{\mathop{\text{Id}}}
\begin{document}
\title{Geometric resolution of singular Riemannian foliations}

\author{Alexander Lytchak}
\address{A. Lytchak, Mathematisches Institut, Universit\"at Bonn,
Beringstr. 1, 53115 Bonn, Germany}
\email{lytchak\@@math.uni-bonn.de}

\subjclass[2000]{53C20, 34C10}

\keywords{Conjugate points, isoparametric foliations, proper foliations,
isometric group actions}

\begin{abstract}
We prove that an isometric action  of a Lie group on a Riemannian manifold  admits a resolution preserving 
the transverse geometry if and only if the action is infinitesimally polar. We  provide applications concerning topological simplicity of several classes of isometric actions, including polar and variationally complete ones. All results are proven in the more general case of singular
Riemannian foliations.
\end{abstract}

\thanks{The author was supported in part by the SFB 611 
{\it Singul\"are Ph\"anomene und Skalierung in mathematischen Modellen.} and by the MPI for mathematics in Bonn}

\maketitle
\renewcommand{\theequation}{\arabic{section}.\arabic{equation}}

\pagenumbering{arabic}

\section{Introduction}
For an isometric action of a Lie group $G$ on a Riemannian manifold $M$
the presence of singular orbits is the main source of difficulties to 
understand the geometric and topological properties of the action. It 
 seems natural to look for   some procedure resolving the singularities,
i.e., some way to pass from $M$ to some other $G$-manifold $\hat M$ with
only regular orbits, related to $M$ in some canonical way.  For the choice of 
the procedure it is crucial, what kind of information one would like 
to preserve by this resolution. If one only would like to let the 
regular part of the action unchanged, then there is a canonical procedure
resolving an arbitrary action. One starts with the most singular stratum, 
replaces it by the projectivized normal bundle and proceeds inductively.
The reader is referred,  for instance, to \cite{wasserman} or to
\cite{molinodes} for this topological approach.
The disadvantage of this method is that many crucial geometric 
and topological properties of the action  are ``concentrated'' in the singular
locus and in the transverse geometry and  cannot be traced by this 
procedure. 

 In geometry it seems natural to consider the quotient 
$M/G$ with the induced metric as the essence of the action. Thinking of the 
action as  of a (singular) foliation, one considers the transverse geometry 
as the  most important object.  Therefore it seems natural to look
 only for such resolutions $\hat M$ with a $G$-equivariant surjective map 
$f:\hat M \to M$ such that the induced map $f: \hat M /G \to M/G$ is an 
isometry (some partial 
 resolutions of this type have already been considered, for 
instance in  \cite{grovesearle}).
 The main technical result of this paper (\tref{mainthm})
states that such a resolution exists if and only if all isotropy representations
of the action are polar.  Many natural classes of actions, for instance
polar ones, variationally complete ones or actions of cohomogeneity at most two
satisfy  this property of being infinitesimally polar. Moreover, if the action
is infinitesimally polar there is a canonical resolution that inherits
many properties of the original action.  This provides a way to 
reduce the study of some topological and geometric properties of actions
 to the case of regular actions, where they can be easily established,
see the subsequent results in the introduction.

 It turns out that the action itself does not play a role in our 
considerations, but only the decomposition of the manifold into orbits, i.e.,
a {\it singular Riemannian foliation}.  
 We refer the reader to \cite{Molino} or to the preliminaries in  
 Section \ref{prelim} for  basics about singular Riemannian foliations.
Readers only interested in the special case of group actions may just consider
all singular Riemannian foliations as orbit decompositions of an isometric 
group action. We also would like to mention \cite{wiesend},
where the ideas of this paper are elaborated 
and simplified in the  case of isometric group actions.

\begin{defn}
Let $\mathcal F$ be a singular Riemannian foliation on a
  Riemannian manifold $M$. A geometric resolution of $(M,\mathcal F)$ is a 
smooth surjective map $F:\hat M\to M$ from a smooth Riemannian manifold 
$\hat M$ with a regular Riemannian foliation $\hat {\mathcal F}$ such that 
the
following holds true. For all smooth curves $\gamma$ in $\hat M$ the transverse
lengths of $\gamma$ with respect to $\hat {\mathcal F}$ and of $F (\gamma )$ 
with respect to $\mathcal F$ coincide.   
\end{defn}

Here the transverse length  is defined as usual in the theory of foliations
as the length of the projection to local quotients 
(Subsection \ref{transsubsec}).  The last requirement in the 
definition above  
means  that  $F$ sends leaves of $\hat {\mathcal F}$ to leaves of
 $\mathcal F$ and induces a length-preserving map between the quotients
$F:\hat M /\hat {\mathcal F} \to M/\mathcal F$, see Section \ref{onlyif}. 
Considering the quotient space $M/\mathcal F$ with its local metric
structure as the essence of the singular Riemannian foliation
$(M,\mathcal F)$, the above definition becomes the most natural one.

 Our main result reads as follows:

\begin{thm} \label{mainthm}
 Let $M$ be a Riemannian manifold and let $\mathcal F$ be a singular
Riemannian foliation on $M$. Then $(M,\mathcal F)$ has a geometric
resolution if and only if $\mathcal F$ is infinitesimally polar. If 
$\mathcal F$ is infinitesimally polar then there is a canonical resolution
  $F:\hat M  \to M$ with the following properties. The resolution $\hat M$
is of the same dimension as $M$ and the map $F$ induces
a bijection between the spaces of leaves.  Moreover, $F$ is a diffeomorphism,
when restricted to the preimage of the set of regular points of 
$(M,\mathcal F)$. The map $F$ is proper and $1$-Lipschitz. In particular,
the resolution $\hat M$
 is compact or complete if $M$
has the corresponding property.  The isometry group $\Gamma$ of 
$(M,\mathcal F)$ acts by isometries on $(\hat M, \hat {\mathcal F})$ and
the map $F:\hat M\to M$ is $\Gamma$-equivariant. If $\mathcal F$
is given by the orbits of a group $G$ of isometries of $M$ then
$G$ acts by isometries on $\hat M$, and $ \hat {\mathcal F}$
is given by the orbits of $G$. If $M$  is complete
 then the singular Riemannian foliation $\mathcal F$ has no 
horizontal conjugate 
points if and only
if $ \hat {\mathcal F}$  has no horizontal conjugate points.  If $M$
is complete then the singular
Riemannian foliation  
$\mathcal F$ is polar  if and only if   $ \hat {\mathcal F}$ is polar.
\end{thm}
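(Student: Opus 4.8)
The plan is to prove the equivalence through a chain of three reductions, transporting the polarity question first to the regular part of $M$, then across $F$, and finally to all of $\hat M$. \emph{Step 1: reduction to the regular stratum.} For a singular Riemannian foliation on a \emph{complete} manifold, I claim that polarity is equivalent to polarity of its restriction to the (connected and dense) regular stratum $M_{\mathrm{reg}}$, i.e.\ to the horizontal distribution $\mathcal H$ on $M_{\mathrm{reg}}$ being integrable with totally geodesic integral manifolds. The forward implication is immediate. For the converse, fix a regular point $p$, let $\Sigma_0\subset M_{\mathrm{reg}}$ be the integral manifold of $\mathcal H$ through $p$, and let $\Sigma$ be its maximal totally geodesic extension in $M$; by completeness $\Sigma$ is a complete immersed submanifold with $\dim\Sigma=\codim\mathcal F$. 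One must verify that $\Sigma$ is a section: it meets every leaf, and it does so orthogonally. Orthogonality at a singular point $q\in\Sigma$ follows by approximating $q$ by points $q_n\in\Sigma\cap M_{\mathrm{reg}}$, using that $T_{q_n}\Sigma$ is orthogonal to the regular leaf through $q_n$ and that the tangent space to the leaf through $q$ is contained in the lower limit of these leaf tangent spaces. That every leaf meets $\Sigma$ follows because the saturation of $\Sigma$ is open and closed in $M$; openness near a singular point $q$ is where \emph{infinitesimal polarity} enters, since near $q$ the foliation splits off the infinitesimal foliation at $q$, which is polar, and $\Sigma$ is a section for it near $q$. I expect this step to be the main obstacle, although several of its ingredients are standard in the theory of polar foliations.

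\emph{Step 2: transfer across $F$.} By the already established properties of the canonical resolution, $F$ restricts to a diffeomorphism of $\hat M_0:=F^{-1}(M_{\mathrm{reg}})$ onto $M_{\mathrm{reg}}$; since by its construction $\hat M$ modifies the metric only near the singular locus (and since $F$ is a $1$-Lipschitz geometric resolution, hence transverse-length preserving), this restriction is in fact an isometry carrying $\hat{\mathcal F}|_{\hat M_0}$ to $\mathcal F|_{M_{\mathrm{reg}}}$. Hence these two regular Riemannian foliations are isomorphic as foliated Riemannian manifolds, so $\mathcal F|_{M_{\mathrm{reg}}}$ is polar if and only if $\hat{\mathcal F}|_{\hat M_0}$ is polar.

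\emph{Step 3: spreading to $\hat M$, and conclusion.} Since $\hat{\mathcal F}$ is regular on all of $\hat M$, its horizontal distribution $\hat{\mathcal H}$ is a globally defined smooth distribution, and $\hat M_0$ is open and dense in $\hat M$ (its complement is the preimage of the closed nowhere dense singular set of $M$, density holding by the construction of $\hat M$). Both ``$\hat{\mathcal H}$ is integrable'' and ``the integral manifolds of $\hat{\mathcal H}$ are totally geodesic'' are pointwise closed conditions — vanishing of the Frobenius tensor, respectively of the second fundamental form of the horizontal foliation — so they hold on the dense set $\hat M_0$ if and only if they hold on all of $\hat M$; moreover $\hat M$ is complete, being proper over the complete $M$ via a $1$-Lipschitz map, so these integral manifolds are complete sections. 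Therefore $\hat{\mathcal F}|_{\hat M_0}$ is polar if and only if $\hat{\mathcal F}$ is polar. Chaining the equivalences of Steps 1--3 yields that $\mathcal F$ is polar if and only if $\hat{\mathcal F}$ is polar, as claimed.
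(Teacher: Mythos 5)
Your proposal addresses only the very last clause of the theorem, namely that for complete $M$ the foliation $\mathcal F$ is polar if and only if $\hat{\mathcal F}$ is, and it explicitly takes ``the already established properties of the canonical resolution'' as given. But those properties are the substance of the theorem: the construction of $\hat M$ as the set of infinitesimal horizontal sections inside the Grassmannian bundle $Gr_k(M)$ (Section \ref{dessec}, via the gauge $I_{g,g^+}$ reducing to the isoparametric case treated by Boualem), the definition of $\hat g$ and the verification that $\hat{\mathcal F}$ is a regular Riemannian foliation and $F$ a geometric resolution, the properness, $1$-Lipschitzness, equivariance and the horizontal conjugate point statement, and above all the converse direction that the existence of any geometric resolution forces infinitesimal polarity (Section \ref{onlyif}, via curvature bounds in local quotients, \lref{onegeod} and \pref{onepoint}). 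None of this is proved or even sketched, so the proposal cannot stand as a proof of the stated theorem.

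Even within the clause you do treat there are problems. In Step 2 you assert that $F:\hat M_0\to M_{\mathrm{reg}}$ is an isometry because the construction ``modifies the metric only near the singular locus''; this is false: by construction $\hat g$ changes $g$ along the leaf directions everywhere (on $\hat{\mathcal F}$ it is the canonical Grassmannian-bundle metric), and only the horizontal distribution and the transverse metric are preserved. That weaker statement is all you need -- under the identification of $\hat M_0$ with $M_{\mathrm{reg}}$ the two horizontal distributions coincide, which is exactly what the paper uses -- but your justification as written is wrong. More seriously, your Step 1 sets out to reprove, via a ``maximal totally geodesic extension'' of a horizontal integral manifold and an open-closed saturation argument, precisely the result the paper simply quotes from \cite{Alexandrino1}: local polarity is equivalent to integrability of the horizontal distribution on the regular part, and a locally polar singular Riemannian foliation on a complete manifold is polar. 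Your sketch does not establish this (such maximal totally geodesic extensions need not exist or be well defined, and the openness of the saturation near singular points is only asserted), and you yourself flag it as the main obstacle. Either cite the known result, as the paper does, or give a genuine proof; as it stands this step, together with the omission of the entire construction and of the only-if direction, is a real gap.
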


The  infinitesimal polarity of $\mathcal F$ means that locally
the singular Riemannian foliation $\mathcal F$ is diffeomorphic to
an isoparametric  singular Riemannian foliation on a Euclidean
space (Subsection \ref{infpolsubsec}). 
For  polar singular Riemannian foliations we refer to Subsection 
\ref{polarsubsec}
(cf. \cite{Terng}, \cite{Boualem}, \cite{Alexandrino}, \cite{Alexandrino1})
 and for singular Riemannian foliations without horizontal  conjugate 
points we refer to  \cite{BottS}, \cite{LT}
\cite{expl}.

Before we are going to comment on this theorem and related results, we state 
some consequences that motivated our study of geometric resolutions. Recall
that a  (regular) Riemannian foliation $\mathcal F$ on a Riemannian manifold
$M$ is called {\it simple} if it is given by the fibers of a Riemannian submersion.
If $M$ is complete (or, more generally, if $\mathcal F$ is full, see 
Section \ref{toposec}) then $\mathcal F$ is simple if and only if all
leaves of $\mathcal F$ are closed and have no holonomy (\cite{hermann}).
The next result generalizes \cite{blum-hebda} and \cite{Hebda}, Theorem 2 to
the realm of  singular Riemannian foliations.

\begin{thm} \label{proper}
Let $M$ be a complete, simply connected Riemannian manifold, and let 
$\mathcal F$ be a singular Riemannian foliation on $M$.  If $\mathcal F$
is polar, or if $\mathcal F$ has no horizontal conjugate points 
then the leaves of $\mathcal F$ are closed. Moreover, the restriction
of $\mathcal F$ to the regular part of $M$ is a simple foliation.
\end{thm}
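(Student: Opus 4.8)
The plan is to reduce the statement to the case of regular Riemannian foliations by means of the geometric resolution of \tref{mainthm}, and there to invoke the classical rigidity theory.

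Both classes of foliations in the hypothesis are infinitesimally polar, so \tref{mainthm} provides the canonical resolution $F:\hat M\to M$, with $\hat M$ complete, $\hat{\mathcal F}$ regular, and $\hat{\mathcal F}$ polar, respectively without horizontal conjugate points, precisely when $\mathcal F$ is. I would first observe that the two assertions descend along $F$. Since $F$ is proper and $M$ is locally compact, $F$ is a closed map, and since it induces a bijection of the leaf spaces, $F^{-1}$ of a leaf is a single leaf; hence a leaf of $\mathcal F$ is closed as soon as the corresponding leaf of $\hat{\mathcal F}$ is. Likewise $F$ restricts to a foliated diffeomorphism of the open saturated set $F^{-1}(M_0)$ onto the regular part $M_0$ of $M$, and since simplicity is inherited by open saturated subfoliations, it suffices to know that $\hat{\mathcal F}$, restricted to its regular part, is given by a Riemannian submersion. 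One must, however, keep track of one more piece of data: although $\hat M$ itself need not be simply connected — cohomogeneity-one examples already show this — the diffeomorphism $F^{-1}(M_0)\cong M_0$ together with $\pi_1(M)=1$ shows that $\pi_1(\hat M)$ is generated by loops lying in leaves of $\hat{\mathcal F}$ (those coming from small meridians around the singular strata).

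It thus remains to prove: a complete regular Riemannian foliation $\hat{\mathcal F}$ which is polar or has no horizontal conjugate points, and for which $\pi_1(\hat M)$ is generated by loops in leaves, has closed leaves and is simple on its regular part. I would establish this by pulling back to the universal cover $\tilde M$: the lifted foliation $\tilde{\mathcal F}$ is again complete, regular, and polar, respectively without horizontal conjugate points, and now $\tilde M$ is simply connected, so the classical results apply — the Cartan--Ambrose--Hicks-type rigidity for foliations without horizontal conjugate points (in the regular case going back to \cite{blum-hebda}, \cite{Hebda}; cf.\ \cite{BottS}, \cite{LT}, \cite{expl}), respectively the description of the leaf space by a section and the generalized Weyl group in the polar case (\cite{Terng}, \cite{Boualem}, \cite{Alexandrino}, \cite{Alexandrino1}) — and they yield that $\tilde{\mathcal F}$ is simple, say $\pi:\tilde M\to B$ a Riemannian submersion. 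One then descends: the deck group $\Gamma=\pi_1(\hat M)$ acts by foliated isometries on $\tilde M$, hence by isometries on $B$, with $B/\Gamma=\hat M/\hat{\mathcal F}$. The key point is that this action of $\Gamma$ on $B$ is properly discontinuous — here is where the hypothesis on $\pi_1(\hat M)$ enters, since a lift of a loop lying in a leaf joins two points of one and the same leaf of $\tilde{\mathcal F}$, so its class in $\Gamma$ fixes a point of $B$, and a further analysis of these generators then gives proper discontinuity. Granting it, the leaves of $\hat{\mathcal F}$ are closed, and over the free locus of $\Gamma$ in $B$ — the image of the regular part — the induced map is a Riemannian submersion onto a manifold, so $\hat{\mathcal F}$ is simple there. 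Together with the second paragraph this completes the proof.

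The hard parts are, first, the classical regular simply connected case — above all the no-horizontal-conjugate-points half, where the absence of refocusing along horizontal geodesics must be turned, in Cartan--Ambrose--Hicks fashion, into a global holonomy transversal; this is the nontrivial input, and also the source of the simple-connectivity hypothesis (irrational linear flows on flat tori are complete with no horizontal conjugate points yet have non-closed leaves, and $\pi_1$ of the torus is not generated by leaf-loops). Second, the topological bookkeeping: that the resolution of a simply connected manifold has $\pi_1$ generated by loops in leaves, and that this forces the deck-group action on the leaf space of the lifted foliation to be properly discontinuous with the expected free locus, so that both conclusions survive the descent.
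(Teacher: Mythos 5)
Your first half coincides with the paper's: both hypotheses give infinitesimal polarity, \tref{mainthm} supplies the complete regular resolution $(\hat M,\hat{\mathcal F})$ preserving polarity, respectively absence of horizontal conjugate points, and the classical regular results (\cite{blum-hebda}, resp.\ the proof of \cite{Hebda}, Theorem 2) are applied on a simply connected cover. The genuine gap is in your descent, which you run through $\pi_1(\hat M)$ acting on the leaf space $B$ of the lifted foliation. Two steps there do not hold up as stated. First, the claim that $\pi_1(\hat M)$ is generated by loops in leaves ``coming from small meridians around the singular strata'' does not follow from $F^{-1}(M_0)\cong M_0$ and $\pi_1(M)=1$: the exceptional set $\hat M\setminus F^{-1}(M_0)$ has codimension one in general, so $\pi_1(F^{-1}(M_0))\to\pi_1(\hat M)$ need not be surjective. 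For the foliation of $\R^2$ by concentric circles the resolution is the real blow-up at the origin; there the meridian classes generate only the index-two subgroup, and the missing generator is a loop inside the exceptional fiber -- it does lie in a leaf, but that requires a separate argument about the topology of the fibers which you do not give. Second, and more seriously, ``each generator fixes a point of $B$, and a further analysis gives proper discontinuity'' is both a non sequitur and the wrong target: an element fixing a point can still have dense orbits nearby (non-closed leaves) or nontrivial germ-stabilizers over the regular image (holonomy, i.e.\ exceptional leaves), which are exactly the phenomena to be excluded; proper discontinuity alone would not even rule out holonomy. The mechanism that actually kills these classes is \lref{simplelift}: Baire category together with the fact that an isometry of the Alexandrov-space quotient is determined by its restriction to an open set, applied to a class $g$ such that \emph{every} leaf meeting an open set contains a loop in the class of $g$. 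That hypothesis holds for meridians around codimension-two strata of $M$ (the normal circles all lie in leaves), but it fails for the extra generators of $\pi_1(\hat M)$: in the blow-up example the exceptional-leaf generator acts on the lifted quotient $B\cong\R$ by $x\mapsto -x$, not trivially, and nearby regular leaves only carry its square. So the ``further analysis'' is precisely the hard step, and it cannot be carried out for your generating set on $\hat M$.

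The paper sidesteps all of this by never analyzing $\pi_1(\hat M)$: the universal cover of $\hat M$ is used only to import the classical regular results, one then restricts to the regular part and identifies it with $M_0$, concluding that the lift of $\mathcal F_0$ to the universal cover $\tilde M_0$ is simple; the descent is performed on $M_0$ itself via \tref{topothm}. There $\pi_1(M)=1$ makes $\pi_1(M_0)$ normally generated by meridians around the codimension-two strata, each meridian circle is contained in a leaf (a transnormal system of the normal plane with a point leaf consists of concentric circles), so every leaf in a tube around the stratum contains such a loop and \lref{simplelift} shows these classes act trivially on $\tilde M_0/\tilde{\mathcal F}_0$; hence $M_0/\mathcal F_0$ equals this manifold quotient and $\mathcal F_0$ is simple. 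If you reroute your third paragraph through \tref{topothm} on $M_0$ instead of through $\pi_1(\hat M)$, your argument closes up and becomes the paper's proof.
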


In \cite{Terng} it is shown that isoparametric foliations on simply connected 
 spaces of constant curvature have closed leaves and that there are no 
{\it exceptional leaves}, i.e., that all regular leaves have trivial holonomy. 
In  \cite{tobenphd} it is shown that if $\mathcal F$ is a polar singular
Riemannian foliation on  a simply connected 
symmetric space $M$ then properness of all leaves implies  
vanishing of holonomy of regular  leaves. Finally, in \cite{altoben} the same
result was shown for an arbitrary complete, simply connected space $M$.  
Thus, in the case of polar singular Riemannian foliations only the closedness
of $\mathcal F$ is new.

 Since a connected group of isometries of a Riemannian manifold is closed  
if and only its orbits are closed, \tref{proper} reads in the case of
group actions as follows:

\begin{cor}
Let $M$ be a complete, simply connected manifold and let a connected group
$G$ act by isometries of $M$. If the action is polar or variationally
complete then  the image of $G$ in the isometry group of $M$ is closed
and there are no exceptional orbits of the action.
\end{cor}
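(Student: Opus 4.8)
The plan is to deduce the corollary directly from \tref{proper} applied to the orbit foliation, using the standard dictionary between isometric group actions and singular Riemannian foliations. First, note that the decomposition $\mathcal F$ of $M$ into the orbits of $G$ is a singular Riemannian foliation: since $G$ acts by isometries, any geodesic that meets one orbit orthogonally meets every orbit it crosses orthogonally, which is precisely the defining property. If the action is polar, i.e., there is a closed immersed submanifold meeting all orbits orthogonally, then $\mathcal F$ is a polar singular Riemannian foliation in the sense of Subsection~\ref{polarsubsec}. If the action is variationally complete, then its orbit foliation $\mathcal F$ has no horizontal conjugate points: this is exactly the characterization of variational completeness in foliation terms (cf. \cite{LT}, \cite{BottS}). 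Either way $(M,\mathcal F)$ satisfies the hypotheses of \tref{proper}.

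Applying \tref{proper} gives two conclusions. First, the leaves of $\mathcal F$ are closed; since the leaves are the orbits $Gx$, this means every orbit is a closed subset of $M$. The $G$-action factors through a homomorphism into the isometry group of $M$, and its image $\bar G$ is a connected group of isometries with the same orbits as $G$. By the fact recalled immediately before the statement, a connected group of isometries of a Riemannian manifold is closed if and only if its orbits are closed; hence the image of $G$ in the isometry group of $M$ is closed.

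Second, \tref{proper} asserts that the restriction of $\mathcal F$ to the regular part of $M$ is a simple foliation, i.e., given by the fibres of a Riemannian submersion. In particular every regular leaf is closed in the regular part and has trivial holonomy. Recall that an exceptional orbit is, by definition, a regular orbit that is not principal, equivalently a regular orbit with non-trivial (necessarily finite) holonomy. Since every regular leaf of $\mathcal F$ has trivial holonomy, every regular orbit is principal, so there are no exceptional orbits, which finishes the proof.

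The only substantial input is \tref{proper}; all the remaining steps are routine translations between the action and its orbit foliation. The one place that deserves a line of care is the implication \emph{variationally complete} $\Rightarrow$ \emph{no horizontal conjugate points}, where the precise definition of variational completeness (and its reformulation via the orbit foliation) is invoked; once this is granted, the corollary is immediate.
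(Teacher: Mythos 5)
Your proposal is correct and follows exactly the paper's route: the paper states this corollary as an immediate reformulation of \tref{proper}, using the same fact that a connected group of isometries is closed if and only if its orbits are closed, and reading ``no exceptional orbits'' as trivial holonomy of the regular leaves coming from simplicity of the foliation on the regular part. The translations you make (orbit decomposition is a singular Riemannian foliation, polar action gives a polar foliation, variational completeness gives absence of horizontal conjugate points) are precisely the ones the paper relies on.
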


 From \tref{proper} and \cite{expl}, Theorem 1.7 we immediately get
a complete description of singular Riemannian  foliations without 
horizontal conjugate points in terms of their quotient spaces. Since
 singular Riemannian  foliations without  horizontal conjugate points 
generalize  the concept of variationally complete actions introduced in 
\cite{Bott} and \cite{BottS} and investigated in  \cite{Conlon},\cite{GT},
\cite{DO} and \cite{LT},
the next result also gives a description of variationally complete actions
in terms of the quotient spaces.  Since complete non-negatively
curved Riemannian orbifolds without conjugate points are flat,
the next result generalizes the main results of \cite{DO}, \cite{GT} and
\cite{LT}.

\begin{cor} \label{coro}
Let $M$ be a complete Riemannian manifold and let $\mathcal F$ be a singular
Riemannian foliation. Then $\mathcal F$ does not have horizontal conjugate
points if and only if the lift $\tilde {\mathcal F}$ of $\mathcal F$ to the
universal covering $\tilde M$ of $M$ is closed and the quotient 
$\tilde M/\tilde {\mathcal F}$ is a Riemannian orbifold without conjugate 
points. 
\end{cor}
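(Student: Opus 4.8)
The statement to prove is Corollary~\ref{coro}: for a complete Riemannian manifold $M$ with singular Riemannian foliation $\mathcal F$, the foliation has no horizontal conjugate points iff the lift $\tilde{\mathcal F}$ to the universal cover $\tilde M$ is closed and $\tilde M/\tilde{\mathcal F}$ is a Riemannian orbifold without conjugate points.

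The plan is to reduce everything to the simply connected situation and then invoke Theorem~\ref{proper} together with the cited Theorem 1.7 of \cite{expl}. The key preliminary observation is that the property of having no horizontal conjugate points is a local, transverse condition, so it is inherited back and forth between $\mathcal F$ on $M$ and its lift $\tilde{\mathcal F}$ on the universal cover $\tilde M$: the covering map $\pi:\tilde M\to M$ is a local isometry, it sends leaves of $\tilde{\mathcal F}$ to leaves of $\mathcal F$ and horizontal geodesics to horizontal geodesics, and conjugate points along a horizontal geodesic are detected by the transverse Jacobi equation, which is unchanged under local isometry. So $\mathcal F$ has no horizontal conjugate points iff $\tilde{\mathcal F}$ does. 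This lets us assume from now on that $M$ itself is simply connected, $\mathcal F = \tilde{\mathcal F}$.

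**The main argument.** Suppose first that $\mathcal F$ (on the simply connected $M$) has no horizontal conjugate points. By Theorem~\ref{proper} the leaves of $\mathcal F$ are closed; in particular $\mathcal F$ is closed. It remains to see that the quotient $M/\mathcal F$ is a Riemannian orbifold without conjugate points. Here is exactly where Theorem~\ref{expl} 1.7 enters: a singular Riemannian foliation without horizontal conjugate points is automatically infinitesimally polar (this is what the sentence preceding the corollary asserts), hence by Theorem~\ref{mainthm} it admits a geometric resolution $F:\hat M\to M$ with $\hat{\mathcal F}$ regular, $F$ inducing an isometry of quotients, and $\hat{\mathcal F}$ again having no horizontal conjugate points. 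For a \emph{regular} Riemannian foliation with closed leaves on a simply connected complete manifold, Theorem~\ref{proper} gives that $\hat{\mathcal F}$ is simple, i.e.\ the fibers of a Riemannian submersion $\hat M\to B$; since $\hat{\mathcal F}$ has no horizontal conjugate points, the base $B=\hat M/\hat{\mathcal F}$ is a complete Riemannian manifold without conjugate points (this is the classical regular statement, e.g.\ from \cite{expl}). Because $F$ induces an isometry $\hat M/\hat{\mathcal F}\to M/\mathcal F$, the quotient $M/\mathcal F$ is isometric to $B$ and thus is a Riemannian orbifold (in fact a manifold here) without conjugate points. Actually, to get the orbifold statement in the stated generality one argues that $M/\mathcal F$ carries the orbifold structure coming from the infinitesimally polar (isoparametric) local models, and the no-conjugate-points property passes from $\hat M$ through $F$.

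**The converse and the main obstacle.** For the converse, assume $\tilde{\mathcal F}$ is closed and $\tilde M/\tilde{\mathcal F}$ is a Riemannian orbifold without conjugate points. We must show $\tilde{\mathcal F}$ has no horizontal conjugate points; by the reduction above this gives the result for $\mathcal F$ on $M$. Here one uses the characterization from \cite{expl} in the other direction: horizontal conjugate points along a horizontal geodesic $\gamma$ correspond, via the Riemannian submersion description on the regular part, to conjugate points of the projected geodesic $\bar\gamma$ in the quotient; since the quotient orbifold has no conjugate points, there are none for $\gamma$ either, at least for horizontal geodesics staying in the regular stratum, and a limiting/closure argument (using that the regular part is dense and that the relevant Jacobi fields depend continuously) extends this to all horizontal geodesics. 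The main obstacle is precisely this last point: making rigorous the passage between ``conjugate points of the foliation'' and ``conjugate points of the quotient orbifold'' across the singular strata, where the submersion picture degenerates. This is handled by the already-established machinery — the geometric resolution of Theorem~\ref{mainthm} desingularizes the foliation without changing the transverse geometry, so horizontal Jacobi fields on $M$ correspond to horizontal Jacobi fields on the regular manifold $\hat M$, where the correspondence with quotient Jacobi fields is standard. Thus the corollary follows by combining Theorem~\ref{proper}, Theorem~\ref{mainthm}, and Theorem 1.7 of \cite{expl}.
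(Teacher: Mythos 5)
Your reduction to the universal cover and the use of \tref{proper} to obtain closedness of $\tilde{\mathcal F}$ match the paper's intent, but the forward direction of your argument contains a genuine error. You apply \tref{proper} (equivalently, the simplicity results for regular foliations) to the resolution $(\hat M,\hat{\mathcal F})$, implicitly assuming $\hat M$ is simply connected; it need not be, even when $M$ is. Concretely, for the foliation of $\mathbb R^n$, $n\geq 2$, by concentric spheres (the orbits of $SO(n)$, which is polar and has no horizontal conjugate points), the canonical resolution $\hat M$ is the blow-up of the origin, i.e.\ the tautological line bundle over $\mathbb{RP}^{n-1}$, which is not simply connected; its exceptional leaf, the zero section $\mathbb{RP}^{n-1}$, has holonomy $\mathbb Z /2$ (nearby sphere leaves project to it two-to-one), so $\hat{\mathcal F}$ is \emph{not} simple. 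Accordingly your conclusion that $B=\hat M/\hat{\mathcal F}$, and hence $M/\mathcal F$, is a Riemannian \emph{manifold} without conjugate points is false: in this example $M/\mathcal F=[0,\infty)$, an orbifold with boundary. What \tref{proper} actually yields is simplicity only of the restriction to the regular part (and only after the lifting argument); singular leaves always produce boundary/orbifold points of the quotient, cf.\ \tref{boundary}. Your fallback sentence (``the orbifold structure coming from the \dots local models, and the no-conjugate-points property passes from $\hat M$ through $F$'') and, in the converse direction, the ``limiting/closure argument'' across the singular strata are precisely where the real content lies, and you give no argument for either; so as written the proof asserts something false and leaves the essential step unproven.

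The paper does not route the corollary through the resolution at all. Since the absence of horizontal conjugate points is a local transverse notion, it passes to $\tilde M$; \tref{proper} gives that $\tilde{\mathcal F}$ is closed; and Theorem~1.7 of \cite{expl} is then invoked, which carries exactly the equivalence you are missing: it gives infinitesimal polarity and relates the absence of horizontal conjugate points of a closed singular Riemannian foliation on a complete manifold to the quotient being a Riemannian orbifold without conjugate points, across the singular strata, in both directions. The repair of your write-up is therefore to delete the simplicity claim for $\hat{\mathcal F}$ (and the ensuing ``manifold'' conclusion) and to use \cite{expl}, Theorem~1.7 for both implications, with \tref{proper} supplying only the closedness of $\tilde{\mathcal F}$ in the forward direction.
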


To deduce \tref{proper} from \tref{mainthm} we proceed as follows. If 
$\mathcal F$ is polar then $\mathcal F$ is also infinitesimally polar.
If $\mathcal F$ has no horizontal conjugate points then it is
infinitesimally polar as well, due to \cite{expl}, Theorem 1.7. Thus
we may apply \tref{mainthm} and obtain a regular Riemannian foliation
$\hat {\mathcal F}$ on a complete Riemannian manifold $\hat M$ that
is polar or has no horizontal conjugate points. In the first case  we  apply
\cite{blum-hebda} and  deduce that the lift
of $\hat {\mathcal F}$ to the universal covering of $\hat M$ is
a simple Riemannian foliation. In the second case, the 
leaves of the regular Riemannian foliation $\hat {\mathcal F}$ on the
complete Riemannian manifold $\hat M$ have no focal points and
the proof of \cite{Hebda}, Theorem 2 shows that   the lift
of $\hat {\mathcal F}$ to the universal covering of $\hat M$ is again 
a simple Riemannian foliation.  But $(\hat M,\hat {\mathcal F})$
coincides with $(M,\mathcal F)$ on the regular part $M_0$ of $M$. 
Therefore, the restriction of $\mathcal F$ to $M_0$ becomes simple,
when lifted to the universal covering $\tilde M_0$ of $M_0$.
Thus, \tref{proper}  follows from the next general topological observation
whose proof   will be given in Section \ref{toposec}. The proof of this result
is implicitly  contained in \cite{Molino}, p.213-214 
(see also \cite{molinors}).

\begin{thm} \label{topothm}
Let $M$ be a complete, simply connected Riemannian manifold and let 
$\mathcal F$ be a singular
Riemannian foliation on $M$. If the restriction of $\mathcal F$ to the
regular part $M_0$ becomes simple, when lifted to the universal covering 
$\tilde M_0$ of $M_0$,  
then the restriction of $\mathcal F$ to $M_0$ is a simple
foliation. 
\end{thm}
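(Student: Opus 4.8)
The goal is purely topological: upgrade "simple after passing to the universal cover $\tilde M_0$" to "simple on $M_0$ itself", using simple connectivity of the whole manifold $M$ (which is strictly stronger than simple connectivity of the open dense subset $M_0$, and this is where the singular stratum must enter).

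The plan is to argue via holonomy of the regular foliation $\mathcal F|_{M_0}$. First I would recall the criterion (Hermann's theorem, cited in the excerpt): a full regular Riemannian foliation is simple iff all leaves are closed and have trivial holonomy. Since $\mathcal F|_{M_0}$ becomes simple after lifting to $\tilde M_0$, the leaves upstairs are closed with trivial holonomy; pushing down, the leaves of $\mathcal F|_{M_0}$ are already closed (closedness is preserved under the covering projection because a leaf downstairs is covered by leaves upstairs), so the only obstruction left is the holonomy of regular leaves. Thus the entire content is: \emph{the holonomy of every regular leaf of $\mathcal F$ is trivial.} The holonomy group of a leaf $L_0 \subset M_0$ is a quotient of $\pi_1(L_0)$, and under $\tilde M_0 \to M_0$ it is killed, so the holonomy of $L_0$ is generated by the image of $\ker(\pi_1(L_0)\to\pi_1(M_0))$ — equivalently, the holonomy homomorphism factors through $\pi_1(M_0)$ via the exact sequence of the "fibration" $L_0 \hookrightarrow M_0$. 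So I need: the composite $\pi_1(L_0) \to \pi_1(M_0)$, followed by... actually what I want is that loops in $M_0$ act trivially on the transverse direction, i.e., that $\pi_1(M_0)$ acts trivially on $M_0/\mathcal F$ via the foliated holonomy of $M_0$.

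The key geometric input — and the step I expect to be the main obstacle — is to show $\pi_1(M_0) \to \pi_1(M)$ is \emph{surjective with the holonomy-relevant kernel generated by small loops around the singular strata}, and that these small loops around singular strata act trivially on the transverse geometry. Concretely: $M \setminus M_0$ is the union of the singular strata, each of codimension $\ge 2$ in $M$ (a standard fact for singular Riemannian foliations), so $\pi_1(M_0) \twoheadrightarrow \pi_1(M)$ and the kernel is normally generated by meridian loops $\mu_i$ around the codimension-$\ge 2$ singular strata. Since $M$ is simply connected, $\pi_1(M_0)$ is in fact \emph{normally generated by the meridians $\mu_i$}. Therefore it suffices to check that each meridian $\mu_i$ around a singular stratum acts trivially in the holonomy of $\mathcal F|_{M_0}$. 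For this I would use the local structure near a singular stratum: in a distinguished tubular neighborhood, $\mathcal F$ looks like the product of the stratum with an infinitesimal (isoparametric-type) foliation on a slice, and a meridian loop is contained in such a slice minus its singular point; one then checks directly that the slice holonomy along such a loop is trivial (the leaves near a singular leaf in the slice are, up to the local model, distance spheres / homogeneous in the relevant sense, with trivial holonomy around the singular leaf). This local computation — identifying the holonomy of a meridian with the (trivial) holonomy in the infinitesimal foliation on the slice — is the heart of the argument and the part requiring care.

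Having shown every meridian acts trivially, the holonomy homomorphism of $\mathcal F|_{M_0}$ — which is a representation of (a groupoid built from) $\pi_1(M_0)$ and which we've seen is normally generated by the $\mu_i$ together with elements that die in $\tilde M_0$ (hence act trivially by the simplicity upstairs) — is trivial on all of $\pi_1(M_0)$. Combined with closedness of the leaves, Hermann's criterion gives that $\mathcal F|_{M_0}$ is simple, completing the proof. I would remark, as the excerpt does, that this argument is essentially the one in \cite{Molino}, pp.~213--214: the only real subtlety is the interplay between the codimension-$\ge 2$ singular strata and the fundamental group, which is exactly what lets simple connectivity of $M$ (not just of $M_0$) do the work.
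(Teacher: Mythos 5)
Your overall skeleton matches the paper's: delete codimension $\ge 3$ strata, observe that $\pi_1(M_0)$ is normally generated by meridians $g_i$ around the remaining codimension--$2$ strata (using $\pi_1(M)=1$), and then try to show each $g_i$ acts trivially on the transverse structure. But there are two genuine gaps in the way you propose to close the argument.

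First, the claim that ``closedness is preserved under the covering projection because a leaf downstairs is covered by leaves upstairs'' is false in general: a leaf $L$ of $\mathcal F_0$ has preimage $\bigcup_{\gamma\in\Gamma}\gamma\tilde L$ in $\tilde M_0$, and a union of closed leaves need not be closed (think of the lift of an irrational linear foliation of the torus to $\R^2$). In the paper, closedness of $\mathcal F_0$ is not an input but an \emph{output}: once one shows $\Gamma=\pi_1(M_0)$ acts trivially on $B=\tilde M_0/\tilde{\mathcal F_0}$, each $\gamma\in\Gamma$ sends $\tilde L$ to itself, so the preimage of $L$ is the single closed leaf $\tilde L$, giving closedness of $L$ and at the same time $M_0/\mathcal F_0=B$. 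Your plan tries to establish closedness separately and first, and that step does not go through as stated.

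Second, and more fundamentally, the ``direct local computation showing the meridian has trivial slice holonomy'' does not yield what you need. The object the meridian $g_i$ acts on is $B=\tilde M_0/\tilde{\mathcal F_0}$, a \emph{global} space; a local computation near the singular stratum cannot by itself determine an isometry of $B$. (If it could, the hypothesis about $\tilde M_0$ would be unnecessary.) What the local structure near a codimension--$2$ stratum actually gives is weaker and cleaner: the circle fibers of the tubular neighborhood are contained in leaves of $\mathcal F$, so every leaf meeting the tube contains a loop in the free homotopy class of $g_i$. The paper's Lemma~\ref{simplelift} is the missing bridge from this local fact to the global conclusion: lifting such a loop shows that for every such leaf, \emph{some conjugate of} $g_i$ fixes the corresponding point of $B$; since there are only countably many conjugates and each has closed fixed set, Baire's theorem produces one conjugate fixing a non-empty open subset of $B$; and since $B$ is an inner metric space with curvature locally bounded below (so isometries are determined by their restriction to any open set), that conjugate is the identity, hence so is $g_i$. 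Without this Baire-plus-rigidity step your argument does not close. Your discussion of leaf holonomy as a quotient of $\pi_1(L_0)$ factoring through $\pi_1(M_0)$ also conflates the holonomy representation of a single leaf with the $\pi_1(M_0)$-action on $B$; the latter is the correct object, and it is what Lemma~\ref{simplelift} controls.
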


 In Section \ref{toposec} we will  discuss a more general version
 of the theorem above.  We also will derive   some further consequences of \tref{topothm}
 concerning the general structure of infinitesimally polar foliations
 with closed leaves  on simply connected manifolds. These  results are independent of our main
 \tref{mainthm}, but are related to \tref{proper} and therefore included here.
 To state these results we will need some notations.
 
  For a
  Riemannian orbifold  $B$,  we denote by $\partial B$ 
 the union of all closures of all singular strata of $B$ that have codimension $1$ in $B$  and call it 
  the {\it boundary} of  $B$
 (This coincides with the boundary in the sense of Alexandrov geometry). 
 
 \begin{thm} \label{boundary}
 Let $\mathcal F$ be a closed infinitesimally polar singular Riemannian foliation on a complete
 manifold $M$ with quotient orbifold $B$. Then all singular leaves of $\mathcal F$ are contained in the
boundary   $\partial B$. If $M$
 is simply  connected then the converse is also true, i.e., $\partial B$ is the set of all singular leaves.
 In particular, for simply connected $M$, the quotient $B$ has no boundary if and only if $\mathcal F$ is a regular 
 foliation.  
 \end{thm}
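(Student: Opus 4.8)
The plan is to work leaf by leaf using the infinitesimal structure. Let $L$ be a leaf through a point $p \in M$ and let $\mathcal F_p$ be the infinitesimal foliation in the normal space $\nu_p L$, i.e. the slice representation's foliation on a small ball; by infinitesimal polarity $\mathcal F_p$ is (diffeomorphic to) an isoparametric foliation on the Euclidean space $\nu_p L$. The local quotient of $M$ near $L$ is isometric to an open subset of the cone over $\nu_p L /\mathcal F_p$, and the latter cone is a Riemannian orbifold precisely because $\mathcal F_p$ is isoparametric (this is the content of infinitesimal polarity as recalled in Subsection \ref{infpolsubsec}). The first step is the purely local observation: the leaf $L$ is singular (i.e. has dimension strictly less than the regular dimension) if and only if the origin is a singular point of the orbifold $\nu_p L /\mathcal F_p$, and — since this orbifold is a quotient of Euclidean space by an isoparametric foliation, hence a Weyl-chamber-type cone — the origin lies in the boundary $\partial(\nu_p L/\mathcal F_p)$ exactly when the foliation $\mathcal F_p$ is not the trivial (regular) foliation by points, which in turn is exactly the condition $\dim L < \dim M - \operatorname{codim}\mathcal F$. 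Combining, $L \subset \partial B$ whenever $L$ is singular; this gives the first, unconditional, assertion.

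For the converse under the hypothesis that $M$ is simply connected, the point is to rule out the possibility that a regular leaf maps into $\partial B$. A regular leaf $L$ that maps to a codimension-one singular stratum of $B$ would have to have nontrivial holonomy: the local picture around $L$ would be a $\mathbb Z/2$ quotient (reflection) of a regular foliation, so the regular leaf would be an exceptional leaf in the sense discussed after \tref{proper}. But by \tref{proper} — or rather by the combination of \tref{mainthm} with \cite{altoben} and \cite{blum-hebda}/\cite{Hebda} that is summarized there — a closed singular Riemannian foliation on a complete simply connected manifold whose infinitesimal foliations are polar has no exceptional leaves, once one also knows (again from that circle of results, using closedness of $\mathcal F$) that the regular part restricted foliation is simple after passing to the universal cover of $M_0$, and then \tref{topothm} upgrades this to simplicity of $\mathcal F|_{M_0}$ itself. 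Simplicity of $\mathcal F|_{M_0}$ forces every regular leaf to have trivial holonomy, hence no regular leaf can meet $\partial B$; therefore $\partial B$ is contained in, and by the first part equals, the set of singular leaves.

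For the last sentence: if $M$ is simply connected then, by what was just shown, $\partial B = \emptyset$ if and only if there are no singular leaves, i.e. if and only if $\mathcal F$ is a regular foliation. The two directions are immediate once the equivalence ``$p$ singular $\iff$ image of $p$ in $\partial B$'' is in hand, together with the remark that a singular Riemannian foliation with no singular leaves is by definition regular.

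I expect the main obstacle to be the converse direction, specifically pinning down the implication ``regular leaf in a codimension-one singular stratum $\Rightarrow$ exceptional leaf'' in full generality and then invoking the no-exceptional-leaves conclusion at the right level of generality. One must be careful that the results imported via \tref{proper} apply: they require $M$ complete and simply connected (given) and $\mathcal F$ closed (given) and infinitesimally polar (given), so the hypotheses match, but the logical routing through \tref{mainthm}, the resolution $\hat M$, and \tref{topothm} should be spelled out rather than taken for granted. The local orbifold model and the identification of its boundary with the locus where the infinitesimal foliation is nontrivial is standard for isoparametric foliations and should cause no real trouble, but it is worth stating cleanly since it is used in both directions.
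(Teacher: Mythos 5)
Your first assertion (singular $\Rightarrow$ image in $\partial B$) is correct and follows essentially the same local route as the paper: reduce to the infinitesimal isoparametric model, observe that a nontrivial Weyl group produces a boundary vertex, and pass to $B$ through the finite local quotient. No issue there.

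The converse is where the proposal goes wrong. You want to rule out regular leaves mapping into codimension-one strata, and for this you invoke the chain \tref{mainthm} $\to$ \cite{blum-hebda}/\cite{Hebda} $\to$ \tref{topothm} to conclude that $\mathcal F|_{M_0}$ is \emph{simple}, hence there are no exceptional leaves at all. This overshoots in two ways. First, the simplicity of the lifted foliation on the universal cover of $M_0$ is established in the paper only for polar foliations and for foliations without horizontal conjugate points (via the Blumenthal--Hebda and Hebda arguments); it is \emph{not} a consequence of infinitesimal polarity alone, which is the only hypothesis in \tref{boundary}. Second, and more fundamentally, ``no exceptional leaves'' is false in this generality: it is precisely condition (1) of \tref{goodorb}, which the paper presents as one of four \emph{equivalent} conditions that may or may not hold for a closed infinitesimally polar foliation on a complete simply connected manifold. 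Equivalently, $\pi_1^{orb}(B_0)=1$ does not force $B_0$ to be a manifold --- a teardrop-type orbifold has trivial orbifold fundamental group but a codimension-two cone point, which would correspond to an exceptional leaf. So ``$\mathcal F|_{M_0}$ is simple'' cannot be derived here, and your argument for the converse collapses.

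What the paper actually uses is weaker and does work in general: Corollary \ref{pi1} (valid for any full singular Riemannian foliation with closed leaves on a simply connected $M$, no resolution needed) gives $\pi_1^{orb}(B_0)=1$, which implies $B_0$ is \emph{orientable}, which forces $B_0$ to have no codimension-one strata, i.e.\ $\partial B_0=\emptyset$. Since $B_0$ is open, any intersection with $\partial B$ would put a codimension-one stratum of $B$ inside $B_0$, a contradiction. Your local reduction --- ``a regular leaf meeting a codimension-one stratum has holonomy a reflection'' --- is the right geometric picture, but the correct global input to exclude it is orientability of $B_0$, not simplicity of $\mathcal F_0$. You should replace the appeal to no exceptional leaves with the appeal to $\pi_1^{orb}(B_0)=1 \Rightarrow B_0$ orientable $\Rightarrow$ no codimension-one strata.
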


 For foliations of codimension $2$ we  will deduce from the last theorem a result  generalizing a known 
 statement about    compact transformation groups (Theorem 8.6 in Chapter IV of \cite{Bredon}):

\begin{cor} \label{cor2d}
Let $M$ be a complete simply connected Riemannian manifold and let $\mathcal F$ be a closed singular Riemannian foliation
with a quotient $B=M/\mathcal F$ of dimension $2$. Then either the foliation is regular
or there are no exceptional leaves. 
 \end{cor}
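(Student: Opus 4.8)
The plan is to combine \tref{boundary} with the structure theory of $2$-dimensional orbifolds. First I would note that a singular Riemannian foliation with $2$-dimensional quotient has codimension $2$ and is therefore infinitesimally polar, so \tref{boundary} applies: $B=M/\mathcal F$ is a Riemannian orbifold and, $M$ being simply connected, its boundary $\partial B$ is exactly the image of the singular leaves. If $\partial B=\emptyset$ then $\mathcal F$ is regular and the first alternative of the statement holds, so from now on I assume $\partial B\neq\emptyset$ and must show that $\mathcal F$ has no exceptional leaves.

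Next I would pass to the regular part: let $M_0\subseteq M$ be the union of the leaves of maximal dimension, $\mathcal F_0=\mathcal F|_{M_0}$ and $B_0=M_0/\mathcal F_0$. Since $\partial B$ is precisely the image of the singular leaves, $B_0$ is the interior of $B$, an orbifold without boundary; and because $\partial B\neq\emptyset$, $B_0$ is non-compact. I would then rephrase the assertion in orbifold language: an exceptional leaf is a regular leaf $L$ whose holonomy is non-trivial, finite (the leaves being closed), and acts effectively and orthogonally on the $2$-dimensional transverse slice. This holonomy group cannot contain a reflection, for then $F(L)$ would lie on $\partial B$, contradicting that $\partial B$ consists of singular leaves; hence it is a non-trivial cyclic rotation group and $F(L)$ is a cone point of $B_0$, and conversely every cone point of $B_0$ is the image of an exceptional leaf. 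Thus it suffices to prove that the non-compact $2$-orbifold $B_0$ is a manifold, i.e. has no cone points.

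For this I would use that a non-compact $2$-orbifold is good (the only bad $2$-orbifolds being the closed teardrops and spindles), so that its local isotropy groups inject into $\pi_1^{\mathrm{orb}}(B_0)$; it then suffices to show $\pi_1^{\mathrm{orb}}(B_0)=1$. Since $\mathcal F_0$ is a regular Riemannian foliation with closed leaves, $M_0\to B_0$ is an orbifold Seifert fibration with connected fibre a regular leaf $L$, which yields an exact sequence $\pi_1(L)\to\pi_1(M_0)\to\pi_1^{\mathrm{orb}}(B_0)\to 1$, so I need $\pi_1(L)\to\pi_1(M_0)$ to be surjective. Now $M\setminus M_0$ has codimension $\ge 2$ in $M$, its codimension-$2$ part being a smooth submanifold $\Sigma$ lying over the union of the open edges of $\partial B$; since $M$ is simply connected, $\pi_1(M_0)=\pi_1(M\setminus\Sigma)$ is normally generated by the meridians of the components of $\Sigma$. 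But transversally to $\Sigma$ the foliation $\mathcal F$ is, by the local model of a codimension-one singular stratum of an infinitesimally polar foliation, the foliation of $\R^2$ by concentric circles about the origin; hence a meridian of $\Sigma$ is such a circle and lies in a single regular leaf, so it lies in the image of $\pi_1(L)$. That image is a normal subgroup of $\pi_1(M_0)$ containing a normal generating set, hence equals $\pi_1(M_0)$, and therefore $\pi_1^{\mathrm{orb}}(B_0)=1$, as required.

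I expect the main obstacle to be the orbifold-topology bookkeeping in the last step: setting up the Seifert exact sequence for the possibly non-complete foliated manifold $M_0\to B_0$, and verifying in this generality that $\pi_1(M_0)$ is normally generated by the meridians of $\Sigma$. If one wishes to avoid orbifold fundamental groups, the same computation can instead be fed into \tref{topothm}: it shows that the lift of $\mathcal F_0$ to the universal cover $\tilde M_0$ has trivial orbifold fundamental group, hence manifold leaf space, hence is simple, whereupon \tref{topothm} gives that $\mathcal F_0$ itself is simple and in particular has no exceptional leaves. The genuinely geometric input — and the only place where simple connectivity of $M$ enters beyond \tref{boundary} — is the elementary observation that a meridian of the codimension-$2$ singular stratum is a closed concentric-circle leaf and therefore dies in the leaf space; everything else is \tref{boundary} together with the classification of $2$-orbifolds.
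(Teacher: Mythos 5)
Your argument is correct and follows the same overall strategy as the paper's proof: infinitesimal polarity from $\dim B=2$, non-emptiness of $\partial B$ from \tref{boundary}, non-compactness of $B_0$, triviality of $\pi_1^{\mathrm{orb}}(B_0)$, and the classification of $2$-orbifolds. The only deviation is that you re-derive $\pi_1^{\mathrm{orb}}(B_0)=1$ from scratch via a Seifert-type exact sequence plus the meridian computation, whereas the paper simply invokes \cref{pi1}, whose proof (through the Proposition of Section~\ref{toposec} and \lref{simplelift}) is exactly the observation you make --- that the meridians of the codimension-two singular locus lie in single leaves, die in the leaf space, and normally generate $\pi_1(M_0)$ by simple connectivity of $M$. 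So you have reconstructed \cref{pi1} rather than citing it; the content is the same and your proof could be shortened by one step accordingly.
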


  For further investigations of exceptional orbits we need another definition.
A {\it Coxeter orbifold} 
(cf. \cite{coxeter}) is  Riemannian orbifold locally diffeomorphic to {\it Weyl chambers},
i.e., to  quotients  of the Euclidean space by  finite  Euclidean Coxeter groups.
Note that in a Coxeter orbifold
each non-manifold  point is contained in the boundary. In  dimension $2$ the converse
holds as well, i.e., a two-dimensional  orbifold is a Coxeter orbifold if it does not have isolated 
singularities. In particular, a Coxeter orbifold does not have to be a {\it good} orbifold, as it was
claimed in  \cite{coxeter} and cited in the previous version of this paper (a disc with an additional conical
singularity on the boundary is a counterexample, cf. Remark \ref{wrong}).

 Now we can state:

\begin{thm} \label{goodorb}
Let $M$ be a complete, simply connected Riemannian manifold and let
$\mathcal F$ be a closed infinitesimally polar singular Riemannian foliation on $M$ with quotient
$B=M/\mathcal F$. Then the following are equivalent:

\begin{enumerate}
\item There are no exceptional leaves;
\item The regular part $B_0:= M_0 /\mathcal F$ is a good orbifold;
\item The quotient $B$ is a Coxeter orbifold;
\item All non-manifold points of the orbifold $B$ are contained in the boundary $\partial B$.
\end{enumerate} 
\end{thm}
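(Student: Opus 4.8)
The plan is to prove the cycle of implications $(1)\Rightarrow(3)\Rightarrow(4)\Rightarrow(1)$ and, separately, the equivalence $(1)\Leftrightarrow(2)$. Two of the arrows are essentially formal. The implication $(3)\Rightarrow(4)$ is immediate from the remark recorded just before the theorem: in a Coxeter orbifold every non-manifold point lies in the boundary. For $(4)\Rightarrow(1)$ I would apply \tref{boundary}: since $M$ is simply connected, $\partial B$ is exactly the image of the union of the singular leaves, so the regular part $B_0=M_0/\mathcal F$ equals the open subset $B\setminus\partial B$. Hypothesis $(4)$ then says that $B_0$ contains no non-manifold point of $B$, hence no non-manifold point of the orbifold $B_0$; thus $B_0$ is a manifold, which means that every regular leaf of $\mathcal F$ has trivial holonomy, i.e. there are no exceptional leaves.

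For $(1)\Rightarrow(3)$ I would argue locally, since being a Coxeter orbifold is a local property. Fix $p\in M$ with leaf $L=L_p$. By infinitesimal polarity, a neighbourhood of the image of $p$ in $B$ is isometric, as a Riemannian orbifold, to the quotient of the normal space $\nu_pL$ by its infinitesimal foliation $\mathcal F_p$, and $\mathcal F_p$ is isoparametric. Using the local product description of $\mathcal F$ in a tubular neighbourhood of $L$, one checks that an exceptional leaf of $\mathcal F_p$ would produce a regular leaf of $\mathcal F$ with non-trivial holonomy; hence $(1)$ forces $\mathcal F_p$ to have no exceptional leaves. By the structure theory of isoparametric foliations of Euclidean space (\cite{Terng}), the quotient of such a foliation without exceptional leaves is, as an orbifold, a Weyl chamber of the associated finite Euclidean Coxeter group. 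So every point of $B$ has a Weyl-chamber neighbourhood, and $B$ is a Coxeter orbifold.

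It remains to establish $(1)\Leftrightarrow(2)$. The implication $(1)\Rightarrow(2)$ is trivial: under $(1)$ the orbifold $B_0$ is a manifold, and manifolds are good orbifolds. For $(2)\Rightarrow(1)$ I would reduce everything to \tref{topothm}, so it suffices to show that the lift $\tilde{\mathcal F}_0$ of $\mathcal F|_{M_0}$ to the universal cover $\tilde M_0$ of $M_0$ is a simple foliation. Since $\mathcal F$ is closed, $\mathcal F|_{M_0}$ is a Riemannian foliation with closed leaves and quotient orbifold $B_0$, so the projection $\pi_0\colon M_0\to B_0$ is a Seifert-type fibration with connected fibres and therefore induces a surjection $\pi_1(M_0)\to\pi_1^{\mathrm{orb}}(B_0)$. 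Because $B_0$ is good, its orbifold universal cover $\widetilde{B_0}$ is a manifold, and the fibre product $N:=M_0\times_{B_0}\widetilde{B_0}$ is a connected manifold that covers $M_0$ and admits a Riemannian submersion $N\to\widetilde{B_0}$ whose fibres are the lifted leaves; in particular the lifted foliation on $N$ is simple. As $\tilde M_0$ dominates $N$, pulling back once more to $\tilde M_0$ only disconnects the fibres, so $\tilde{\mathcal F}_0$ is again the foliation by fibres of a Riemannian submersion onto a manifold, hence simple. Now \tref{topothm} gives that $\mathcal F|_{M_0}$ itself is simple, so no regular leaf has holonomy, which is $(1)$.

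The step I expect to be the main obstacle is $(2)\Rightarrow(1)$: one must make rigorous the exact sequence of orbifold fundamental groups for the Seifert-type fibration $\pi_0\colon M_0\to B_0$, check that $N=M_0\times_{B_0}\widetilde{B_0}$ is genuinely a manifold and a covering of $M_0$ (this uses that the holonomy of a leaf acts freely on its holonomy cover, so the diagonal action on the local model is free), and verify that the foliations involved are full so that \tref{topothm} applies. The local point in $(1)\Rightarrow(3)$, identifying quotients of exceptional-leaf-free isoparametric foliations with Weyl chambers, is the other technical ingredient, but it is purely infinitesimal and can rest on the isoparametric literature together with the structural facts about infinitesimally polar foliations developed earlier in the paper.
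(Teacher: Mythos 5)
The arrows $(3)\Rightarrow(4)$, $(4)\Rightarrow(1)$ and $(1)\Rightarrow(2)$ in your plan are fine and agree with the paper. The genuine gap is in $(1)\Rightarrow(3)$. Your local argument starts from the claim that a neighbourhood of the image of $p$ in $B$ is isometric to the quotient of the infinitesimal (isoparametric) foliation, i.e.\ to a Weyl chamber $W$. This is false in general; and if it were true, it would prove $(3)$ for \emph{every} closed infinitesimally polar foliation, without ever using hypothesis $(1)$ --- contradicting the equivalence the theorem asserts. What is true (and what the paper uses, via \cite{expl} as in the proof of \tref{boundary}) is that a neighbourhood of $p$ in $B$ is isometric to $W/\Gamma$ for some finite group $\Gamma$ acting isometrically on the Weyl chamber $W$: the projection $U/\mathcal F\to B$ from the local quotient of a distinguished neighbourhood is only finite-to-one, the finite group recording the holonomy of the (possibly non-simply-connected) leaf. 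The entire content of $(1)\Rightarrow(3)$ is to show that this $\Gamma$ acts trivially. Your proposed use of $(1)$ --- excluding ``exceptional leaves of $\mathcal F_p$'' --- cannot supply this, because isoparametric foliations of Euclidean space never have exceptional leaves (Terng, as quoted in the introduction) and their quotient is always a Weyl chamber; so that condition is vacuous and the hypothesis $(1)$ is never actually engaged in your sketch.

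The missing argument is the one the paper gives: since there are no exceptional leaves, $B_0$, and hence $W_0/\Gamma$ (where $W_0$ denotes the regular part of $W$), is a Riemannian manifold, so $\Gamma$ must act freely on $W_0$; but $W_0$ is contractible, and a nontrivial finite group cannot act freely on a finite-dimensional contractible space (it has infinite cohomological dimension); hence $\Gamma$ acts trivially on $W_0$, hence on $W$, and the neighbourhood of $p$ is a genuine Weyl chamber. As a side remark, your route for $(2)\Rightarrow(1)$ via the pulled-back Seifert-type fibration and \tref{topothm} can probably be made to work (with the patch that the lift to $\tilde M_0$ is simple because it is full with closed leaves and trivial holonomy, rather than literally the fibration by fibres of the composed submersion), but it is heavier than necessary: \cref{pi1} already gives $\pi_1^{orb}(B_0)=1$, and a good orbifold with trivial orbifold fundamental group is its own orbifold universal cover, i.e.\ a manifold, which is exactly $(1)$.
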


\begin{ex}
 Closed singular Riemannian foliations that are polar or have no horizontal
conjugate points have good Riemannian orbifolds as quotients (thus $B_0$ is good as well). In the case
of closed polar singular Riemannian foliations on simply connected manifolds
it was shown in \cite{altoben}, that the quotients are Coxeter orbifolds.
\end{ex}

\begin{ex}
If  the singular Riemannian foliation $\mathcal F$ is induced
by the action of a connected group $K$ of isometries, the equivalent conditions of \tref{goodorb}
are also equivalent to the following one: For all $x$ in $M$ the action of the isotropy group
$K_x$ on the horizontal space $H_x$ has connected fibers.  In fact, the sufficiency is clear. Assume on
the other hand that there are no exceptional orbits. Then the  finite group $K_x /K_x ^0$
acts on the quotient $H_x /K_x ^0$ which is a Weyl chamber. The  set of its  regular point is contractible,
thus if the action of $K_x /K_x ^0$ is non-trivial there are elements of $K_x$ that fix some but not all points
in  $H_x /K_x ^0$. But such points correspond to exceptional orbits. See also the proof of  \tref{goodorb}, where the same argument is used.
\end{ex}

 In view of \tref{goodorb} it seems natural to ask the following
\begin{quest}
What simply connected Coxeter orbifolds $B$ can be represented as 
quotient spaces $B=M/\mathcal F$ for some singular Riemannian foliation
$\mathcal F$ on some  simply connected Riemannian manifold $M$.
\end{quest}

\begin{rem} \label{wrong}
Note, that if under the assumptions of \tref{goodorb}, the quotient  $B$ is a good orbifold, then $B_0$ is a 
good orbifold  as well, thus $B$ is a Coxeter orbifold. On the other hand, 
using \cite{haefligerfund} and the arguments of \cite{coxeter}, it is not difficult to deduce, that a 
Coxeter orbifold, that is simply connected as a topological space, is a  good  orbifold 
if and only if the following two conditions are fulfilled:
A wall (a stratum of codimension $1$) intersects a small tube around any stratum of codimension $2$ in a connected set.
If the closures of two walls intersect at different connected components, then the intersection angles at these
components do not depend on the component.  Thus,  it is not to difficult to decide, when the quotient 
$B$ as in \tref{goodorb} is a good orbifold.
\end{rem}

 The proof of \tref{mainthm} is provided in Section \ref{dessec} and Section
\ref{onlyif} along
the following lines.  For an infinitesimally polar $\mathcal F$
on a Riemannian manifold $M$ one uses the ideas of \cite{Boualem} and
\cite{tobenphd}  and defines  the resolution
$\hat M$ to be the subset of the Grassmannian bundle $Gr_k (M)$
consisting of all infinitesimal horizontal sections of $\mathcal F$.
In the polar case the result is  contained in \cite{Boualem} and 
\cite{tobenphd}. In the general case one follows an idea from
\cite{expl} and uses  transformation  relating
horizontal geometry of different Riemannian metrics 
adapted to a given foliation  to reduce the question to the polar case.

\begin{rem}
 The proof shows (and is  based on) the fact that the resolution
$(\hat M,\hat {\mathcal F})$ considered as a  foliation on
a  manifold (disregarding the Riemannian metric on $\hat M$)  does not
depend on the Riemannian metric adapted to the singular Riemannian foliation
$\mathcal F$ on $M$. 
\end{rem}

 To see that a singular Riemannian foliation  
$\mathcal F$ with a metric resolution $\hat {\mathcal F}$ is infinitesimally
polar one observes that in a regular Riemannian foliation transversal
sectional curvatures remain bounded on compact subsets. Now, one uses 
the transverse equivalence of $\mathcal F$  and $\hat {\mathcal F}$ and 
deduces  from \cite{expl}, Theorem 1.4 that this property characterize 
infinitesimally polar singular Riemannian foliations. 
 This already proves the claim in the case of a
compact resolution $\hat M$. In the general case one needs to be more
careful and to extend  some  results  from \cite{expl} slightly (\lref{onegeod}).

 We would like to mention that Sections \ref{onlyif}, \ref{dessec} and 
\ref{toposec}  do not depend on each other.
Thus, reader only interested in \tref{topothm} and subsequent results may directly proceed to
Section \ref{toposec} and reader only interested in the (more important) 
if part of \tref{mainthm} may skip Section \ref{onlyif}.

\subsection{Acknowledgments} I am grateful to Gudlaugur Thorbergsson
for helpful conversations and to Stephan Wiesendorf for useful comments on a 
previous version of the paper. I  thank the referee for the careful reading
of the manuscript.

\section{Preliminaries} \label{prelim}
\subsection{Singular Riemannian foliations}
 A {\it transnormal system} $\mathcal F$ on a Riemannian
manifold $M$ is a decomposition of $M$ into smooth injectively immersed 
connected submanifolds, called leaves,  such that geodesics 
emanating perpendicularly to one leaf stay perpendicularly to all
leaves. A transnormal system $\mathcal F$ is called a {\it singular Riemannian
foliation} if  there are smooth vector fields $X_i$
on $M$ such that for each point $p\in M$ the tangent space
$T_p L (p)$ of the leaf $L(p)$ through $p$ is given
as the span of the vectors $X_i (p) \in T_p M$. We refer to
\cite{Molino} and \cite{Wilk} for more on singular Riemannian foliations.
  Examples
of singular Riemannian foliations are (regular) 
Riemannian foliations and  the orbit decomposition of an isometric group
action.  

\subsection{Stratification}
 Let $\mathcal F$  be a singular Riemannian foliation  on the Riemannian
manifold $M$. The {\it dimension of $\mathcal F$},  $ \dim  (\mathcal F)$,
is the  maximal  dimension of  its leaves. The  {\it codimension of $\mathcal F$},
 $\codim  (\mathcal F, M)$,  is defined by $\dim (M) -\dim (\mathcal F )$.
For $s\leq \dim (\mathcal F)$, denote by $\Sigma _s$
the subset of all points $x\in M$  with $\dim (L(x))=s$. Then
$\Sigma _s$ is an embedded submanifold of $M$  and the restriction of
 $\mathcal F$ to $\Sigma _s$ is a Riemannian foliation. For a point
$x\in M$, we denote by $\Sigma ^x$ the connected component of $\Sigma _s$
through $x$, where $s =\dim (L(x))$.   We call the decomposition of $M$
into the manifolds $\Sigma ^x$ the {\it canonical stratification} of $M$.

 The subset $\Sigma _{\dim (\mathcal F)}$ is open, dense and connected
in $M$. It is the {\it regular stratum}  $M$. It will be denoted by $M_0$
and will also be called the set or regular points of $M$.
 All other  strata  $\Sigma ^x$, called {\it singular strata},
 have codimension at least $2$ in $M$.   For any singular stratum $\Sigma$,
we have $\codim (\mathcal F, \Sigma ) < \codim (\mathcal F, M)$.

\subsection{Infinitesimal singular Riemannian foliations} \label{infinitsing}
 Let $M$ be a Riemannian manifold and let $\mathcal F$ be a singular Riemannian
foliation on $M$. Let $x\in M$ be a point.  Then there is a well defined 
singular Riemannian foliation $T_x \mathcal F$ on the Euclidean space
$(T_x M,g_x)$ with the following properties:

\begin{enumerate}
 
\item There is a neighborhood $O$ of $x$  and a diffeomorphic embedding 
$\phi :O \to T_x M$, with $D_x \phi =Id$ and 
$\phi ^{\ast} (T_x \mathcal F) = \mathcal F | _{O}$.

\item $T_x \mathcal F$ is homogeneous, i.e., for each non-zero real number
$\lambda$, the multiplication by $\lambda $ on $T_x M$ preserves 
$T_x \mathcal F$.

\item  The singular foliation $T_x \mathcal F$ on the tangent
space $T_x M$ does not depend on the Riemannian metric adapted to $\mathcal F$.
\end{enumerate}

 The singular Riemannian foliation $T_x \mathcal F$ on the
tangent space $T_x M$ will be called the {\it infinitesimal 
singular Riemannian foliation  of $\mathcal F$ at the point $x$}.

\subsection{Horizontal sections}  \label{polarsubsec}
We refer to \cite{Boualem}, \cite{Alexandrino}, \cite{Alexandrino1}
 for more on polar singular Riemannian foliations.
Let $\mathcal F$ be a singular Riemannian foliation on a Riemannian manifold
$M$. A global (local) horizontal section through $x$ is a smooth immersed 
submanifold $x\in N \subset M$ that intersects all leaves of $\mathcal F$ 
(all leaves in a neighborhood of $x$), such that all intersections are
orthogonal. $\mathcal F$  is called polar (locally polar)
if there are (local) global horizontal
sections through every point $x\in M$. 
Each  local section $N$ of a singular Riemannian foliation
is totally geodesic. Moreover, for each $x\in N$, $T_x N \subset T_x M$
is a horizontal section of the infinitesimal singular Riemannian foliation
$T_x \mathcal F$.  On the other hand, if $\mathcal F$ is locally polar then  
each horizontal section $V\subset T_x M$ of the infinitesimal singular 
Riemannian foliation $T_x \mathcal F$ is the tangent space to a local
horizontal section of $\mathcal F$.

 Recall, that a singular Riemannian foliation $\mathcal F$ is locally polar
if and only if the restriction of $\mathcal F$ to the regular part 
$M_0$  has integrable horizontal distribution (\cite{Alexandrino1}). Moreover,
a locally polar singular Riemannian foliation on a complete Riemannian
manifold is polar.

\subsection{Infinitesimal polarity} \label{infpolsubsec}
The singular Riemannian foliation $\mathcal F$ is called infinitesimally
polar at the point $x\in M$ if the infinitesimal singular Riemannian
foliation $T_x \mathcal F$ is polar. We say that $\mathcal F$
is infinitesimally polar if it is infinitesimally polar at all
points. In \cite{expl} it is shown
that $\mathcal F$ is infinitesimally polar at the point $x$ if and only
if for all sequences $x_i$ of regular points converging to $x$,
the supremum $\bar \kappa (x_i)$  of the sectional curvatures 
at projections of $x_i$ to local quotients remain  bounded away from 
infinity. Another equivalent condition derived in \cite{expl}, is that
$\mathcal F$ is locally closed at $x$ and that  local quotients at $x$ are 
smooth Riemannian  orbifolds.

\subsection{Transverse length} \label{transsubsec}
Let $M$ be a Riemannian manifold and let $\mathcal F$ be a singular Riemannian
foliation on $M$.  For $x$ in $M$,  we denote by 
$V_x$ the tangent space to the 
leaf $V_x=T_x L(x)$ and call it the {\it vertical space at $x$}. The 
orthogonal complement of $V_x$ will be denoted by $H_x$  (or by $H_x (g)$,
if we want to specify the Riemannian metric $g$). This subspace $H_x$
will be called the {\it horizontal subspace at $x$}. 
 By $P_x:T_x \to H_x$
we denote the orthogonal projection.  The spaces $H_x$ vary semi-continuously.
Therefore, for each smooth curve $\gamma$ in $M$, the value
$L_{hor} (\gamma ):= \int |P_{\gamma (t)} (\gamma '(t))| dt$ is well defined.
We call this quantity the {\it transversal length of $\gamma$}.   If
$B=M/\mathcal F$ is a Hausdorff metric space then $L_{hor} (\gamma )$
is   the length of the projection of $\gamma$ to $B$. 
Note that a smooth curve has transversal length  zero if and only it
is completely contained in one leaf.

\section{The only if part} \label{onlyif}
We are going to prove the only if part of the first statement of
 \tref{mainthm}  in this section. Thus, let $\hat {\mathcal F}$ be
a regular Riemannian foliation on a Riemannian manifold $\hat M$, let
$\mathcal F$ be a singular Riemannian foliation on a Riemannian manifold $M$
and let $F:\hat M \to M$ be a geometric resolution. We are going to analyze
$F$ and to prove that $\mathcal F$ is infinitesimally polar. The proof
in the case of compact $\hat M$  was explained in the introduction. In 
the general case,  we will give a proof along the same lines, but 
the proof becomes  technically more involved.

 First of all, $F$ sends curves of zero transversal length to curves
of zero transversal length, therefore $F$ sends leaves into leaves, i.e.,
$F(\hat L (x))\subset L(F(x))$ for all $x\in \hat M$.   

For each open subset $O$  of $M$ the restriction $F:F^{-1} (O) \to O$
is again a geometric resolution. As usual, let $M_0$ denote  the set
of regular points of $M$ and set $\tilde M:= F^{-1} (M_0)$. Since the
restriction of $\mathcal F$ to $M_0$ is a {\it regular} Riemannian foliation,
we deduce from continuity reasons, that for all $x\in \tilde M$ the
map $G_x:=  P_{F(x)}\circ  D_x F: H_x \to H_{F(x)}$ is an isometric
embedding. Here, the horizontal subspaces $H$ and the projections $P$
are defined as in Subsection  \ref{transsubsec}.   

 On the other hand, $F$ is smooth and surjective. By Sard's theorem there is
at least one point $x\in \tilde M$ such that 
$D_x F :T_x \tilde M\to T_{F(x)} M$ is surjective. Since $D_x F$ sends 
$T_x (L(x))$ to a subspace of $T_{F(x)} (L(F(x)))$ we deduce that 
the map $G_x :H_x \to H_{F(x)}$ must be surjective at such points.
Therefore, $\dim (H_x) =\dim (H_{F(x)})$. Hence, 
$\codim (M, \mathcal F) = \codim (\hat M , \hat {\mathcal F})$. Moreover,
for each $x\in \tilde M$, the map $G_x :H_x \to H_{F(x)}$ is an isometry.

 Thus, for each point $x\in \tilde M$, we find a small neighborhood 
$O$ of $x$ such that $\hat {\mathcal F}$ on $O$ is given by a Riemannian
submersion $s_1:O\to B_1$, such that $\mathcal F$ on $F(O)$ is given by a 
Riemannian submersion $s_2 :F(O) \to B_2$, and such that $F$ induces
an isometry $\bar F :B_1 \to B_2$ between the local quotients.

 This finishes the analysis of $F$ on $\tilde M$. The picture over the 
singular points is more complicated. We start our discussion of the
singular part with the following easy observation.

\begin{lem} \label{curvbound}
 Let $\gamma _1 :[0,a] \to \hat M$ and $\gamma _2:[0,a] \to M$
be horizontal geodesics with $\gamma _2 ((0,a]) \subset M_0$. If 
$F(\gamma _1 (t)) \subset L (\gamma _2 (t))$, for all $t$, then
the sectional curvatures in local quotients at $L (\gamma _2 (t))$, 
$t\in (0,a]$, are uniformly bounded.  
\end{lem}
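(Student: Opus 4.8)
The plan is to reduce the bound on transversal sectional curvatures along the curve $\gamma_2$ to the already-established local picture of $F$ over the regular set $\tilde M = F^{-1}(M_0)$, using compactness of the parameter interval $[0,a]$ away from the single possibly-singular endpoint $t=0$. First I would fix $t_0 \in (0,a]$ and work near $\gamma_2(t_0)$, which lies in $M_0$; by the analysis preceding the lemma, there is a neighborhood $O$ of a point in $F^{-1}(\gamma_2(t_0)) \cap \hat L(\gamma_1(t_0))$ on which $\hat{\mathcal F}$ is given by a Riemannian submersion $s_1 \colon O \to B_1$, $\mathcal F|_{F(O)}$ is given by a Riemannian submersion $s_2 \colon F(O) \to B_2$, and $F$ induces an \emph{isometry} $\bar F \colon B_1 \to B_2$ between the local quotients. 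Since $\gamma_1$ and $\gamma_2$ are horizontal geodesics with $F(\gamma_1(t)) \subset L(\gamma_2(t))$, the curves $s_1 \circ \gamma_1$ and $s_2 \circ \gamma_2$ are the projections of the horizontal geodesics, hence geodesics in $B_1$, $B_2$ respectively, and they correspond under $\bar F$. Therefore the sectional curvatures of $B_2$ along $s_2(\gamma_2(t))$ near $t_0$ equal those of $B_1$ along $s_1(\gamma_1(t))$, which are finite because $B_1$ is a smooth Riemannian manifold (local quotient of a regular foliation).

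The second step is to make the bound uniform on all of $(0,a]$ with a single constant. Here I would use that $\hat{\mathcal F}$ is a \emph{regular} Riemannian foliation on $\hat M$: transversal sectional curvatures of a regular Riemannian foliation, being continuous functions of the footpoint on the (open) Grassmannian bundle of horizontal $2$-planes, are bounded on any compact subset of $\hat M$. The curve $\gamma_1|_{[\varepsilon, a]}$ has compact image for every $\varepsilon > 0$; but to get a bound on all of $(0,a]$ I want to avoid letting $t \to 0$. The key observation is that the relevant quantity is really being compared to the curvature of the \emph{ambient foliated manifold $\hat M$ along $\gamma_1$}, and $\gamma_1 \colon [0,a] \to \hat M$ is defined and continuous on the \emph{closed} interval, so $\gamma_1([0,a])$ is compact in $\hat M$. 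Thus the supremum of transversal sectional curvatures of $\hat{\mathcal F}$ over the compact set $\gamma_1([0,a])$ is a finite constant $C$, and by the local-isometry-of-quotients argument above, the sectional curvatures in the local quotients of $\mathcal F$ at $L(\gamma_2(t))$ for $t \in (0,a]$ are bounded by this same $C$.

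The main obstacle I expect is a matching/consistency issue rather than a curvature estimate: one must check that for each $t_0 \in (0,a]$ there genuinely is a point $\hat x \in \hat M$ with $F(\hat x) = \gamma_2(t_0)$ \emph{and} $\hat x = \gamma_1(t_0)$ — i.e. that $F(\gamma_1(t_0))$ is not merely in the leaf $L(\gamma_2(t_0))$ but can be arranged to be $\gamma_2(t_0)$ itself, so that the local submersion picture near $\gamma_1(t_0)$ maps onto a neighborhood of $\gamma_2(t_0)$. Since $F$ maps leaves onto leaves and $G_{\hat x} \colon H_{\hat x} \to H_{F(\hat x)}$ is an isometry for all $\hat x \in \tilde M$, the image $F(O)$ of a small saturated neighborhood of $\gamma_1(t_0)$ is a saturated neighborhood of $F(\gamma_1(t_0))$ in $M_0$; the point $\gamma_2(t_0)$ lies in the same leaf, hence (after possibly moving along the leaf, which does not change the local quotient) in $F(O)$, and the comparison of quotient curvatures goes through. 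Handling this leaf-matching carefully, together with noting that the geodesic $\gamma_2$ projects to a geodesic in each local quotient (true because $\gamma_2$ is a \emph{horizontal} geodesic of a singular Riemannian foliation, so it stays horizontal and projects to a geodesic on the regular part), is where the small amount of real work lies; the curvature bound itself is then immediate from compactness of $\gamma_1([0,a])$.
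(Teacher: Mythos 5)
Your proposal is correct and follows essentially the same route as the paper: over $(0,a]$ the transverse sectional curvatures of $\mathcal F$ at $L(\gamma_2(t))$ coincide with those of the regular foliation $\hat{\mathcal F}$ at $\hat L(\gamma_1(t))$ via the local-quotient isometry established for $F$ over $F^{-1}(M_0)$, and compactness of $\gamma_1([0,a])$ in $\hat M$ then yields the uniform bound. The leaf-matching point you single out (working at $F(\gamma_1(t))$ rather than $\gamma_2(t)$, which is harmless since the local quotient depends only on the leaf) is exactly how the paper's argument is to be read.
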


\begin{proof}
 From the discussion above we know that the sectional curvatures in 
local quotients at $L(\gamma _1 (t))$ and $\hat L (\gamma _2 (t))$ coincide
for all $t\in (0,a]$. Since $[0,a]$ is compact and $\hat F$ is a {\it regular}
Riemannian foliation, the sectional curvatures in local quotients at 
$\hat L (\gamma _2 (t))$ are uniformly bounded. 
\end{proof}

 The idea is now to find such curves starting at all points 
and to deduce  infinitesimal polarity from this existence.

\begin{lem}
 The open subset $\tilde M$ is dense in $\hat M$.
\end{lem}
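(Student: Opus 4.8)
The plan is to show that $\tilde M := F^{-1}(M_0)$ is dense in $\hat M$ by proving that its complement $\hat M \setminus \tilde M = F^{-1}(M \setminus M_0)$ is contained in a countable union of proper submanifolds of $\hat M$, hence has empty interior. First I would recall the stratification of $M$: the singular locus $M \setminus M_0$ is the disjoint union of the singular strata $\Sigma$, and we need to control $F^{-1}(\Sigma)$ for each such stratum. The key point is that $F$ sends leaves of $\hat{\mathcal F}$ into leaves of $\mathcal F$, so $F^{-1}(\Sigma)$ is saturated with respect to $\hat{\mathcal F}$; moreover, since $F$ is continuous and $\Sigma$ is a locally closed submanifold, $F^{-1}(\Sigma)$ is a locally closed subset of $\hat M$.

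The heart of the argument is to bound the dimension of $F^{-1}(\Sigma)$. Here I would use the transverse length-preservation: over a small neighborhood where $\mathcal F$ restricted to $\Sigma$ is a Riemannian foliation, the transverse geometry of $\mathcal F$ along $\Sigma$ has codimension $\codim(\mathcal F, \Sigma)$, which by the stratification facts recalled in the preliminaries satisfies $\codim(\mathcal F, \Sigma) < \codim(\mathcal F, M) = \codim(\hat{\mathcal F}, \hat M)$. Since $F$ maps transverse-length-zero curves to transverse-length-zero curves and preserves transverse length, the restriction $F: F^{-1}(\Sigma) \to \Sigma$ must be compatible with the transverse structures: roughly, $F^{-1}(\Sigma)$ carries the pulled-back transverse foliation, whose codimension in $\hat M$ is at most $\codim(\mathcal F, \Sigma)$, strictly less than $\codim(\hat{\mathcal F}, \hat M)$. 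But $\hat{\mathcal F}$ is a regular foliation on $\hat M$ of that latter codimension; intersecting a saturated subset whose transverse codimension is strictly smaller forces $F^{-1}(\Sigma)$ to be a proper subset, in fact contained in a submanifold of positive codimension. Running this inductively over the (locally finite, countably many) singular strata, ordered by decreasing leaf dimension, shows $\hat M \setminus \tilde M$ is a countable union of nowhere-dense closed sets, hence $\tilde M$ is dense by the Baire category theorem.

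A cleaner way to package the same idea, which I would actually prefer, is to argue locally and by contradiction: suppose $F^{-1}(\Sigma)$ had nonempty interior $U$ for some singular stratum $\Sigma$. Pick $x \in U$ with $D_x F$ of maximal rank on $U$; then on a neighborhood $F$ factors through a submersion onto an open piece of $\Sigma$, and by the analysis already carried out on the regular part (the map $G_x = P_{F(x)} \circ D_x F$ and the induced isometry of local quotients), $F$ would induce a locally isometric map from a local quotient of $(\hat M, \hat{\mathcal F})$ — an honest Riemannian manifold of dimension $\codim(\hat{\mathcal F}, \hat M)$ — onto a local quotient of $(\Sigma, \mathcal F|_\Sigma)$, which has dimension $\codim(\mathcal F, \Sigma) < \codim(\hat{\mathcal F}, \hat M)$. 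A local isometry (or even a transverse-length-preserving map) cannot decrease dimension in this way, giving the contradiction. Either formulation reduces to the single strict inequality $\codim(\mathcal F, \Sigma) < \codim(\mathcal F, M)$ for singular strata, recorded in the preliminaries.

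The main obstacle I anticipate is the bookkeeping at the level of transverse structures over a singular stratum: one must be careful that $F$ restricted to $F^{-1}(\Sigma)$ genuinely sees the transverse geometry of $\Sigma$ (not some lower-dimensional degeneration), and that "transverse length preserving" on all smooth curves really does prevent a drop in transverse dimension even though $F$ itself may be far from a submersion near the singular set. This is exactly the place where one uses that curves of zero transverse length go to curves of zero transverse length \emph{and conversely the preservation of positive transverse length}, so that the foliated map cannot collapse a transversally $k$-dimensional direction. Once that is pinned down, the density follows formally from Baire category applied to the countable, locally finite family of singular strata.
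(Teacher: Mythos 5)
Your overall strategy is the paper's: localize an alleged open subset of $\hat M\setminus\tilde M$ over a single singular stratum $\Sigma$ and contradict the strict inequality $\codim(\Sigma,\mathcal F)<\codim(M,\mathcal F)=\codim(\hat M,\hat{\mathcal F})$. But the crucial step --- why transverse-length preservation forbids an open set of $\hat M$ from mapping into $\Sigma$ --- is asserted rather than proved, and the mechanisms you propose for it do not work. Choosing $x$ where $D_xF$ has maximal rank does not make $F$ locally a submersion onto an open piece of $\Sigma$: the rank could be small and $F(U)$ need not be open in $\Sigma$, so you do not obtain an ``honest local isometry'' between local quotients; a priori you only get a continuous, leaf-preserving, transverse-length-preserving map, and the claim that such a map ``cannot decrease dimension'' is exactly the content that has to be established (you flag it yourself as the main obstacle and leave it unpinned). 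Likewise, in your first formulation $F^{-1}(\Sigma)$ is merely a saturated, locally closed subset; a ``pulled-back transverse foliation'' on it with controlled codimension is not a defined object, and the assertion that $F^{-1}(\Sigma)$ lies in a submanifold of positive codimension is unsubstantiated --- and also unnecessary, since empty interior is all that is needed, and even Baire can be avoided by shrinking $O$ to the stratum of maximal leaf dimension met by $F(O)$, using lower semicontinuity of leaf dimension.

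For comparison, the paper closes exactly this gap infinitesimally: once $F(O)\subset\Sigma$ and one notes that $\mathcal F|_\Sigma$ is a \emph{regular} Riemannian foliation, the horizontal projections vary continuously along curves in $\Sigma$ (and along the regular foliation $\hat{\mathcal F}$), so equality of transverse lengths on all subintervals of a curve with $\gamma'(0)=v\in H_x$ yields $|P_{F(x)}(D_xF(v))|=|v|$; hence $D_xF$ is injective on $H_x$, and $D_xF(H_x)\subset T_{F(x)}\Sigma$ meets $T_{F(x)}L(F(x))$ only in $\{0\}$, giving $\codim(\hat M,\hat{\mathcal F})=\dim H_x\le\codim(\Sigma,\mathcal F)<\codim(M,\mathcal F)$, contradicting the equality of codimensions obtained earlier via Sard's theorem. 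This pointwise differential argument is precisely the rigorous form of your ``no transverse dimension drop''; alternatively one could salvage your quotient-level version via a Rademacher-type argument (a curve-length-preserving continuous map is locally $1$-Lipschitz and its a.e.\ differential must be isometric, which is impossible into a lower-dimensional target), but as written your proposal has a genuine gap at its central step.
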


\begin{proof}
Assume the contrary and choose an open subset $O$ of 
$\hat M\setminus \tilde M$.  By making $O$ smaller we may assume
that $F(O)$ is contained in a singular 
stratum $\Sigma $ of $M$. Now, the restriction
of $\mathcal F$ to $\Sigma $ is again a {\it regular} Riemannian foliation.
Thus, for each $x\in O$, we obtain by continuity
that $D_x F $ maps $H_x$ injectively onto the subspace $D_x F (H_x )$
that intersects $T_{F(x)} (L(F(x))$ only in $\{ 0 \}$. Thus we deduce
$$\codim (\hat M, \hat {\mathcal F}) =\dim (H_x) \leq 
\codim (\Sigma, \mathcal F) < \codim (M, \mathcal F)$$
since  $\Sigma$ is a singular stratum. This contradicts
the previously obtained equality 
$\codim (\hat M, \hat {\mathcal F})=  \codim (M, \mathcal F)$.
\end{proof}

Now we can prove:
\begin{lem} \label{manycurves}
For each $x\in M$, there are horizontal geodesics 
$\gamma _1 :[0,a] \to \hat M$ and $\gamma _2 :[0,a] \to M$ such that
$\gamma _2 (0)=x$, $\gamma _2 ((0,a] \subset M_0$ and  
$F(\gamma _1 (t)) \subset L (\gamma _2 (t))$, for all $t$.
\end{lem}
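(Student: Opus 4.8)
The plan is to produce the downstairs geodesic $\gamma_2$ out of $x$ first, using the infinitesimal foliation $T_x\mathcal F$, and then to lift it to $\hat M$ using the transverse‑isometric behaviour of $F$ over the regular part established above; the last step — pushing the lift all the way to $t=0$ — will be the delicate one.

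\emph{Step 1: constructing $\gamma_2$.} I would work in the Euclidean space $T_xM$ with the infinitesimal singular Riemannian foliation $T_x\mathcal F$ and the diffeomorphism $\phi\colon O\to T_xM$ from Subsection~\ref{infinitsing}, for which $D_x\phi=\Id$ and $\phi^\ast(T_x\mathcal F)=\mathcal F|_O$. By homogeneity the leaf of $T_x\mathcal F$ through the origin is scaling‑invariant and hence equals its tangent space there, namely $V_x$; so its horizontal space at the origin is exactly $H_x$, while its regular stratum $R$ is open, dense and scaling‑invariant. One checks $R\cap H_x\neq\emptyset$: pick $w\in R$, join the origin to the leaf through $w$ by a distance‑minimizing geodesic; being perpendicular to that leaf it is horizontal by transnormality, so its initial velocity $v$ lies in $H_x$, and since $tv$ lies on the regular leaf through $w$ for some $t>0$ we get $v\in R$ by homogeneity. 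Now put $\gamma_2(t):=\exp_x(tv)$: then $\gamma_2(0)=x$, and $\gamma_2$ is a horizontal geodesic of $\mathcal F$ since its initial velocity is horizontal. Finally $\phi(\gamma_2(t))=tv+o(t)$ while $d(tv,\partial R)=t\,d(v,\partial R)$ with $d(v,\partial R)>0$, so $\phi(\gamma_2(t))\in R$ for all small $t>0$, i.e. $\gamma_2((0,a])\subset M_0$ for some $a>0$; after shrinking $a$ we parametrize $\gamma_2$ by arc length.

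\emph{Step 2: lifting $\gamma_2$.} Choose $\hat y\in F^{-1}(\gamma_2(a))\subset\tilde M$ (nonempty as $F$ is onto). By the local picture over $\tilde M$ established above, near any point of $\tilde M$ the map $F$ reads, in suitable coordinates, as $s_2^{-1}\circ\bar F\circ s_1$ with Riemannian submersions $s_1,s_2$ onto local quotients and an isometry $\bar F$ between them; composing the geodesic $\pi_2\circ\gamma_2$ of the local quotient of $M$ with $\bar F^{-1}$ and lifting horizontally produces, near $\gamma_2(a)$, a horizontal geodesic $\gamma_1$ in $\tilde M$ with $\gamma_1(a)=\hat y$ and $F(\gamma_1(t))\subset L(\gamma_2(t))$. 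Since a horizontal geodesic is determined by a point and a velocity, these local lifts patch to a single horizontal geodesic $\gamma_1$ on a maximal interval $(t_\ast,a]\subseteq(0,a]$ along which $\gamma_1(t)\in\tilde M$ and $F(\gamma_1(t))\subset L(\gamma_2(t))$, and $\gamma_1$ is parametrized by arc length because $\bar F$ and the $s_i$ preserve the relevant lengths.

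\emph{Step 3: extending to the singular fibre.} It remains to prove $t_\ast=0$ and that $\gamma_1$ extends continuously (hence as a geodesic) to $t=0$. If $\gamma_1(t)\to\hat z\in\hat M$ as $t\to t_\ast^+$, then $F(\gamma_1(t))\to F(\hat z)$ with $F(\gamma_1(t))\in L(\gamma_2(t))$ and $\gamma_2(t)\to\gamma_2(t_\ast)$, so by continuity of the transverse distance $F(\hat z)\in L(\gamma_2(t_\ast))$; if $t_\ast>0$ this leaf is regular, so $\hat z\in\tilde M$ and $\gamma_1$ continues past $t_\ast$, contradicting maximality, while if $t_\ast=0$ then $F(\gamma_1(0))\subset L(x)$, as required, and the geodesic velocity converges too. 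So the one thing to rule out is that $\gamma_1(t)$ leaves every compact subset of $\hat M$ as $t\to t_\ast^+$. Here I would first replace $M$ by a small metric ball around $x$ (infinitesimal polarity being local) and $\hat M$ by its preimage, shrink $a$, and then use that $\gamma_1$ has length $\le a$, that its projection $\bar F^{-1}\circ\pi_2\circ\gamma_2$ stays in a compact part of a fixed local quotient, and the semicontinuity of the horizontal spaces $H$, to keep the lift inside a compact region of $\hat M$; this excludes escape and finishes the proof. I expect exactly this non‑escape/extension point to be the real obstacle: Steps 1 and 2 just reassemble the local pictures already available, but in the non‑compact setting nothing a priori confines the horizontal lift to $\hat M$ down to the singular fibre, which is precisely where the argument is "technically more involved" than in the compact case and where one must be careful (compare \lref{curvbound}, whose hypothesis these curves are meant to supply).
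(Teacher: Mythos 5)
Your Steps 1 and 2 are sound and close to what one would expect: the geodesic $\gamma_2$ out of $x$ into $M_0$ is constructed exactly as you describe (homogeneity of $T_x\mathcal F$ plus transnormality give a regular horizontal direction $v$), and the local lifting of $\gamma_2$ over $\tilde M$ by the transverse isometries $G_x$ is correct and unambiguous. You have also correctly diagnosed that the whole difficulty sits in Step 3.

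However, the fix you sketch for Step 3 has a genuine gap, and I do not think it can be closed along the lines you indicate. You choose $\hat y\in F^{-1}(\gamma_2(a))$ arbitrarily and then hope to confine the backward horizontal lift $\gamma_1$ to a compact region of $\hat M$ by using that $F(\gamma_1)$ stays in a compact piece of $M$ (near $x$) and that $\gamma_1$ has length $\le a$. This does not work in general, because $\hat M$ is an arbitrary Riemannian manifold (not assumed complete) and there is no reason for $F^{-1}(K)$ to be compact when $K\subset M$ is compact. A geodesic of finite length in an incomplete manifold can simply fail to extend, and the semicontinuity of the horizontal spaces gives no leverage on this: it controls the \emph{directions} of the lift, not whether the ambient manifold runs out. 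Making the ball around $x$ smaller and shrinking $a$ does not help, because the choice of $\hat y$ is not coordinated with $x$ at all — nothing prevents $\hat y$ from sitting near an ``end'' of $\hat M$ that collapses onto the singular locus.

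The paper avoids this by reversing the order of construction. Instead of first building $\gamma_2$ from $x$ and then lifting backward from an uncontrolled preimage of $\gamma_2(a)$, one first chooses a preimage $y\in F^{-1}(x)$ (this uses surjectivity of $F$ at the singular point, which is exactly where it is needed), and then picks $\epsilon>0$ so small that \emph{every} unit-speed geodesic starting in the $\epsilon$-ball $O$ around $y$ is defined for time at least $\epsilon$ — a purely local fact about Riemannian manifolds that requires no completeness. Then one chooses $z\in\tilde M\cap O$ with $\bar z=F(z)$ in a distinguished tube $U$ around $x$ (possible since $\tilde M$ is dense), runs the geodesic $\gamma_3$ from $\bar z$ to its foot point $\bar x$ on $L(x)\cap U$, and lifts it to a geodesic $\gamma_1$ from $z$. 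Because $\gamma_1$ starts in $O$ and (by the transverse-length-preserving property) has length $d(\bar z,\bar x)<\epsilon$, it is automatically defined on the full interval; there is no escape to rule out. Reversing orientation and replacing $\gamma_3$ by the corresponding geodesic ending exactly at $x$ then gives the claimed pair $(\gamma_1,\gamma_2)$. So the missing idea in your write-up is precisely this: \emph{anchor the lift over $x$ itself}, not over the regular endpoint, so that local geodesic-extension estimates around a chosen preimage of $x$ render the escape problem moot.
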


\begin{proof}
 Choose a distinguished tubular neighborhood $U$ at $x$ and a preimage
$y$ of $x$ in $\hat M$. Make the diameter $\epsilon$ of $U$ so small
that all geodesics starting in the $\epsilon$-neighborhood $O$ of
$y$ are defined at least for the time $\epsilon$.  Take a point
$z\in \tilde M\cap O$ with $\bar z= F(z)\in U$. Let $\bar x$
be the projection of $\bar z$ onto the leaf of $\mathcal F$ through 
$x$ in  $U$. Then $\bar x$ is the only possibly non-regular point
on the geodesic $\gamma _3 =\bar z \bar x$. Consider the horizontal
geodesic $\gamma _1$ in $\hat M$ starting at $z$ in the direction
$h$ with $G_z (h)= \gamma _3 '$. From the understanding
of $F$ on $\tilde M$, we deduce that $F( \gamma _1 (t))$ is contained
in $L (\gamma _3 (t))$ for all $t\in [0, d(\bar z, \bar x)]$. Now, replacing
$\gamma _3$ through a horizontal geodesic starting
in a point on $L(\bar z)$ and ending in $x$,  we obtain
a horizontal geodesic $\gamma _2$ ending in $x$ 
with $F(\gamma _1 (t))\in L(\gamma _2 (t))$. It remains to reverse the
orientations of $\gamma _1$ and $\gamma _2$.
\end{proof}

 Now the proof of the infinitesimal polarity of $\mathcal F$
is finished by combining \lref{curvbound}, \lref{manycurves} and the following
lemma, that  we consider to be of independent interest.

\begin{lem} \label{onegeod}
Let $\mathcal F$ be a singular Riemannian foliation on a Riemannian manifold
$M$. Let $x\in M$ be a point. Let 
$\gamma :[0,\epsilon ] \to M$ be a horizontal geodesic
starting at $x$, such that 
$\gamma ((0,\epsilon ])$ is contained in the set of regular points $M_0$.
If all sectional curvatures in local quotients are uniformly 
bounded along $\gamma (0,\epsilon ]$ then $\mathcal F$ is infinitesimally
polar at $x$.
\end{lem}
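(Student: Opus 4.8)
The statement says: a single horizontal geodesic $\gamma$ emanating from $x$, running into the regular part, along which the local-quotient sectional curvatures stay uniformly bounded, already forces infinitesimal polarity at $x$. From \cite{expl} we know the "right" criterion: $\mathcal F$ is infinitesimally polar at $x$ iff for \emph{every} sequence $x_i \to x$ of regular points the sup of sectional curvatures $\bar\kappa(x_i)$ at the projections stays bounded. So the core difficulty is promoting the hypothesis — boundedness along \emph{one} curve — to boundedness along \emph{all} approaching sequences. The idea is to exploit homogeneity of the infinitesimal foliation $T_x\mathcal F$: blowing up at $x$, the geodesic $\gamma$ together with the uniform curvature bound should pin down the structure of $T_x\mathcal F$ itself, and then the homogeneity (scaling invariance) of $T_x\mathcal F$ spreads the information from the single ray $\gamma'(0)$ to an open cone of horizontal directions, hence to all of $T_x\mathcal F$ near the relevant stratum.

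First I would reduce to the infinitesimal foliation. Replace $M$ by a small distinguished tubular neighborhood $U$ of $x$ and use the embedding $\phi: O \to T_xM$ from Subsection~\ref{infinitsing} with $\phi^\ast(T_x\mathcal F)=\mathcal F|_O$; the local quotients of $\mathcal F$ near $x$ are, up to diffeomorphism of the transverse slice, the local quotients of $T_x\mathcal F$. So it suffices to prove: if $T_x\mathcal F$ is a homogeneous singular Riemannian foliation on Euclidean space admitting one horizontal geodesic ray $v(t)=tv$ (with $v$ horizontal at $0$, and $v(t)$ regular for $t>0$) along which the transverse curvatures are bounded, then $T_x\mathcal F$ is polar. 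Here I use that a homogeneous foliation's leaves are cones, horizontal geodesics through regular points are straight lines, and the transverse metric is scale-invariant — so a curvature bound at one point of the ray is equivalent to a bound along the whole ray, which is the given hypothesis.

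The heart of the argument is then a dimensional induction on $\codim(\mathcal F, M)$, exactly as in \cite{expl}. Restricting the tube $U$ to the normal slice at $x$ we get a homogeneous foliation of strictly smaller-dimensional ambient space; the point $\gamma(\epsilon)$ lies in the regular stratum, and the curvature bound passes to the slice. By induction (the base case being a foliation by points, trivially polar) the infinitesimal foliation at every point of the ray $\gamma(0,\epsilon]$ in the slice is polar. Combined with the local-polarity criterion — $\mathcal F$ restricted to $M_0$ is locally polar iff its horizontal distribution is integrable (\cite{Alexandrino1}) — and the fact that integrability is a closed condition, propagated along $\gamma$ by the curvature bound controlling how the horizontal distribution degenerates, we conclude that a horizontal section exists through points arbitrarily close to $x$, hence (passing to the limit, using completeness of geodesics in the compact slice and that local sections are totally geodesic) a horizontal section survives through $x$. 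One also needs the slightly more careful limiting argument the authors flag: since we only have \emph{one} ray rather than a full neighborhood, one must check that the horizontal section built along $\gamma$ actually fills out a full transverse slice at $x$ — this is where scale-invariance of $T_x\mathcal F$ is used decisively, since a section through $tv$ for small $t$ scales to a section through $v$, and these sweep out enough of the horizontal space.

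\textbf{Main obstacle.} The delicate point is precisely the one the authors warn about before stating the lemma: the earlier results of \cite{expl} are stated in terms of \emph{all} sequences $x_i\to x$, so going from "bounded along $\gamma$" to "bounded along every approach" is not automatic and is false without homogeneity. The proof must genuinely use that $T_x\mathcal F$ is invariant under all nonzero rescalings, so that the single direction $\gamma'(0)$, together with a small transverse neighborhood of it coming from the induction on the slice, generates — under the scaling flow — a full neighborhood of the relevant singular stratum in $T_xM$; only then does the "one geodesic" hypothesis upgrade to the hypothesis of the \cite{expl} criterion. Managing this blow-up/limit carefully, and checking the horizontal distribution's integrability is preserved in the limit $t\to 0$ (rather than degenerating), is the technical crux.
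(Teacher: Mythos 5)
The setup of your proposal — reduce to $T_x\mathcal F$ via a blow-up at $x$ and exploit homogeneity of the infinitesimal foliation — is the right one and matches the paper's, but the crucial quantitative step is missing or even inverted. Under the dilation $v\mapsto \lambda v$ of the Euclidean space $(T_xM,g_x)$ the foliation $T_x\mathcal F$ is preserved, but the Euclidean metric is \emph{not}: distances scale by $\lambda$ and transverse sectional curvatures scale by $\lambda^{-2}$. So the transverse metric is not scale-invariant, and your claim that ``a curvature bound at one point of the ray is equivalent to a bound along the whole ray'' is false — if $\kappa(v)\neq 0$ then $\kappa(tv)=\kappa(v)/t^2\to\infty$ as $t\to 0$. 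The whole point of the blow-up is exactly the opposite deduction: the \emph{uniform} bound along $\gamma(0,\epsilon]$ becomes, after rescaling by $1/t_i$ and passing to the limit foliation $T_x\mathcal F$, the \emph{vanishing} of the transverse sectional curvature at the regular point $v=\gamma'(0)$, because the curvature at $\gamma(t_i)$ gets multiplied by $t_i^2\to 0$. Your proposal never extracts this vanishing, and without it the argument cannot proceed.

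The second issue is the route from boundedness (or vanishing) to polarity. You propose a dimensional induction on codimension plus an appeal to integrability of the horizontal distribution on the regular part being a ``closed condition'' that ``propagates along $\gamma$.'' This is too vague to be checked and is not how the paper concludes. The paper instead isolates a clean statement (Proposition~\ref{onepoint}): on flat $\R^n$, vanishing of transverse sectional curvatures along a regular leaf $L$ is equivalent to vanishing of the O'Neill tensor along $L$; hence Bott-parallel normal fields along $L$ are genuinely parallel; combined with equifocality of such fields (\cite{ATequifoc}), $L$ is isoparametric and $\mathcal F$ is the associated isoparametric foliation, hence polar. You would need to either reproduce this O'Neill tensor/equifocality argument or give a precise replacement; the induction as sketched does not establish it.
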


\begin{proof}
 Consider $T_x \mathcal F$ as the limit of rescaled singular Riemannian 
foliations $(M,\mathcal F)$ as in \cite{expl}, p.10.  As in \cite{expl},
we deduce that
$T_x \mathcal F$ is a singular Riemannian foliation on the Euclidean space
$T_x M$ such that at the regular point $v=\gamma ' (0) \in T_x M$ all
sectional curvatures vanish in local quotients. 
In this case, \pref{onepoint} below implies that $T_x \mathcal F$ is polar.
\end{proof}

\begin{prop} \label{onepoint}
Let $\mathcal F$ be a singular Riemannian foliation
on the Euclidean space $\R ^n$. Let $L$ be a regular leaf such that in
local quotients all sectional curvatures vanish at the image of this leaf.
Then $\mathcal F$ is polar.
\end{prop}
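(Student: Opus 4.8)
The plan is to exploit the homogeneity of $T_x\mathcal F$ together with the curvature condition at one regular leaf, propagating flatness to the whole regular stratum and then invoking the characterization of locally polar foliations via integrability of the horizontal distribution. Concretely, write $\mathcal F$ for the foliation on $\R^n$, fix the regular leaf $L$ and a point $p\in L$, and let $\kappa$ denote the supremum of sectional curvatures of the local quotient metric at points of the regular stratum $\R^n_0$. The hypothesis says $\kappa=0$ at $p$ (i.e.\ along $L$). Because $\mathcal F$ is invariant under all dilations $v\mapsto \lambda v$ of $\R^n$, and dilations act on local quotients as homotheties (scaling the quotient metric by $\lambda^2$, hence the sectional curvature by $\lambda^{-2}$), the vanishing of all quotient curvatures at $L$ forces the same vanishing at every leaf of the form $\lambda\cdot L$. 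So the quotient metric is flat along the cone $C(L):=\{\lambda q : q\in L,\ \lambda>0\}$ over $L$.

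The next step is to spread this flatness to a full neighborhood in $\R^n_0$, and here the argument from \cite{expl} that was used in \lref{onegeod} is the relevant input: the rescaling/blow-up procedure identifies the infinitesimal foliation at a regular point with the (flat) product foliation, so if the quotient is flat at one regular leaf of a \emph{homogeneous} foliation it is flat at \emph{all} regular leaves. Indeed, given any regular point $q$, move radially to a point of $C(L)$ if $q$ already lies on $C(L)$ we are done; otherwise use that $\R^n_0$ is connected and that flatness of the quotient at a regular point is both an open and, via the homogeneity plus the structure of the set where $\bar\kappa$ is finite from \cite{expl}, a closed condition. The cleanest route is: by \cite{expl}, Theorem 1.4 (the boundedness-of-$\bar\kappa$ criterion), $\mathcal F$ is already known to be infinitesimally polar at every point of $\R^n$ once $\bar\kappa$ is locally bounded, and in the homogeneous case local boundedness near $0$ is equivalent to the single bound along $L$ after rescaling; having settled infinitesimal polarity, one applies it at $0$ itself, where $T_0(T_x\mathcal F)=T_x\mathcal F$ by homogeneity, to conclude.

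To finish, observe that once we know the local quotient at every regular point is flat (equivalently, a Euclidean orbifold), the restriction $\mathcal F|_{\R^n_0}$ has locally the structure of the quotient map onto a flat orbifold; in particular its horizontal distribution is integrable, because on a flat local quotient the horizontal distribution is the pullback of the (trivial, parallel) tangent distribution of the base and is tangent to the local sections obtained by exponentiating horizontal directions at a single point. By \cite{Alexandrino1} (cited in Subsection~\ref{polarsubsec}), integrability of the horizontal distribution on $\R^n_0$ is equivalent to $\mathcal F$ being locally polar; and a locally polar singular Riemannian foliation on a \emph{complete} Riemannian manifold is polar, so $\mathcal F$ on $\R^n$ is polar, as claimed.

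\textbf{Main obstacle.} The delicate point is the globalization: deducing flatness of the quotient at \emph{every} regular leaf from flatness at the single leaf $L$. Homogeneity gives it only along the cone $C(L)$; to reach an arbitrary regular point one must either run the blow-up argument of \cite{expl} carefully (identifying the infinitesimal foliation at a regular point with the flat product and checking the curvature bound transfers under the limit) or argue that the locus of regular leaves with flat quotient is closed, which requires the semicontinuity/control of $\bar\kappa$ from \cite{expl}, Theorem 1.4 rather than any elementary continuity. This is where I expect the real work of the proof to lie; the passage from ``flat quotient everywhere on the regular part'' to ``polar'' is then a formal consequence of the cited integrability criterion plus completeness of $\R^n$.
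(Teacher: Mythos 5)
Your proposal has a genuine gap, and in fact it sets off in a direction the paper deliberately avoids. First, you invoke dilation invariance (``$\mathcal F$ is invariant under all dilations of $\R^n$''), but \pref{onepoint} is stated for an \emph{arbitrary} singular Riemannian foliation on $\R^n$; homogeneity is only available in the application inside \lref{onegeod}, where the proposition is applied to $T_x\mathcal F$, not a hypothesis of the proposition itself. Second, even granting homogeneity, the step you yourself flag as the main obstacle is not closed: rescaling propagates flatness only along the cone over $L$, and the criterion of \cite{expl}, Theorem 1.4 requires the curvatures $\bar\kappa(x_i)$ to stay bounded for \emph{all} sequences of regular points $x_i$ converging to the singular point, not just radial ones through $L$; your claim that ``local boundedness near $0$ is equivalent to the single bound along $L$ after rescaling'' is false, so infinitesimal polarity at $0$ (and hence flatness of the quotient on all of the regular part) is never established. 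The closing step, by contrast, is essentially fine in this special setting: since the ambient space is flat, O'Neill's formula shows that vanishing quotient curvature at a regular leaf is equivalent to vanishing of the O'Neill tensor $A$ there, so flatness on the whole regular part would indeed give integrability of the horizontal distribution and then polarity via \cite{Alexandrino1} and completeness. But the middle of your argument, which is where all the work lies, is missing.

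The paper's proof never propagates flatness to other leaves at all; it works entirely along the single leaf $L$. Vanishing of the quotient curvatures at $\{L\}$, together with flatness of $\R^n$, forces the O'Neill tensor $A:H_x\times H_x\to T_xL(x)$ to vanish identically along $L$; consequently every Bott-parallel normal field along $L$ is a genuinely parallel normal field. By the equifocality of singular Riemannian foliations (\cite{ATequifoc}) these parallel normal fields are equifocal, so $L$ is an isoparametric submanifold of $\R^n$ and $\mathcal F$ coincides with the isoparametric foliation it generates, which is polar. If you want to repair your argument, the missing idea is precisely this: convert the pointwise curvature information at $L$ into submanifold-geometric information about $L$ (flat normal bundle with parallel, equifocal normal fields), rather than trying to spread flatness of the quotient over the regular stratum.
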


\begin{proof}
Since $\R ^n$ is flat, the sectional curvatures at the point $\{ L\} $ in local
projections vanish if and only if the O'Neill tensor $A :H_x \times H_x \to 
T_x (L(x))$ vanishes identically  at all points $x\in L$. But this implies
that each Bott-parallel normal field $H$ along $L$ is a parallel
normal field. Since all these fields are equifocal 
(cf. \cite{ATequifoc}), we get that $L$ 
is an isoparametric submanifold of $\R ^n$ and that $\mathcal F$
coincides with the isoparametric foliation defined by the isoparametric
submanifold $L$.  
\end{proof}

\section{Desingularization} \label{dessec}

\subsection{Notations}  First, let $T$ be a finite-dimensional real
vector space with scalar products $g$ and $g^+$. Let $A:T\to T$ be
the linear map defined by $g^+ (A(v),w) =g(v,w)$ for all $v,w \in T$.
Then, for each linear subspace $H$ of $T$, the image $A(H)$ of $H$
satisfies $H^{\perp _g} = (A(H)) ^{\perp _{g^+}}$, i.e., the $g$-orthogonal
complement of $H$ coincides with the $g^+$-orthogonal complement
of $A(H)$.   We will denote the map $A$ by $I_{g,g^+}$. By the same
symbol $I_{g,g^+}$ we denote the induced map on the Grassmannians 
$Gr_k (T)$, i.e., on the spaces of $k$-dimensional linear subspaces of $T$.
Note that $I_{g,g^+} \circ I_{g^+,g} =\Id$.    

 If $M$ is a Riemannian manifold with Riemannian metrics $g,g^+$ then 
we get a bundle automorphism $I_{g,g^+} :TM\to TM$ of the tangent 
bundle $TM$ of $M$.  For $k\geq  0$, we  denote
by $Gr_ k = Gr_k (M)$ the Grassmannian bundle  of the tangent bundle of $M$,
i.e., the bundle of $k$-dimensional subspaces of tangent spaces of $M$.
By the same symbol $I_{g,g^+}$ we will denote the induced bundle
automorphism $I_{g,g^+} :Gr_k  \to Gr_k $.  

 Let now $\mathcal F$ be a singular foliation adapted to the Riemannian
metrics $g$ and $g^+$, i.e., $\mathcal F$ is a singular Riemannian foliation
with respect to the Riemannian metrics $g$ and $g^+$.  For any point 
$x\in M$, we have the subspaces $H_x (g)$ and $H_x (g^+) $ of $g$-horizontal
and of $g^+$-horizontal vectors, respectively. By construction, our 
transformation $I_{g,g^+} $ satisfies $I_{g,g^+} (H_x(g))= H_x (g^+)$,
since $H_x$ is defined as orthogonal complement of the vertical space
$V_x$ that does not depend on the adapted Riemannian metric.

\subsection{Basic construction} \label{gauge}

Let $(M,g)$ be a Riemannian manifold and let $\mathcal F$ be an infinitesimally
polar singular Riemannian foliation on $M$ of codimension $k$.

  We denote by $\hat M \subset Gr_k$ the set 
of all $k$-dimensional infinitesimal sections of $\mathcal F$. Thus
$p^{-1} (x) \subset \hat M$ is the manifold of horizontal sections of
the polar Riemannian foliation $T_x \mathcal F$ on $T_x M$. In particular,
for each regular point $x\in M_0 \subset M$, the preimage  $p^{-1} (x)$
consists of only one point $H_x \in Gr_k M$.
 
We are going to prove:
\begin{enumerate}

\item $\hat M$ is a closed smooth submanifold of $Gr_k$.

\item The decomposition of $\hat M$ into preimages $\hat L = p^{-1} (L)$
of the leaves of  $\mathcal F$ is a smooth foliation $\hat {\mathcal F}$
of $\hat M$.
\end{enumerate}

The definition of $\hat M$ and 
of $\hat {\mathcal F}$ are local on $M$ and so are the claims.
 Thus we may restrict ourselves to a 
small distinguished neighborhood $U$ of a given point $x\in M$.
Pulling back the flat metric on $T_x M$ by the diffeomorphism $\phi$ (Subsection \ref{infinitsing}) , we thus 
reduce the question to  the following situation, to which we
will refer later as the {\it standard case}. The manifold
  $M$ is an open subset of the Euclidean
space $\mathbb R ^n$ with a flat (constant) Riemannian metric $g^+$; and
 $\mathcal F$  is the restriction of an  
isoparametric foliation on $\mathbb R^n$ to $M$.  Moreover, $g$ 
is a Riemannian  metric on $M$ adapted to $\mathcal F$.

Let $\hat M ^+$ be the subset of the Grassmannian 
$Gr _k$ of all infinitesimal horizontal
sections of $\mathcal F$ with respect to the  Riemannian metric
$g^+$.   Moreover, by $\hat {\mathcal F} ^+$ we denote the decomposition
of $\hat M^+$ into preimages of leaves of $\mathcal F$. Due to
\cite{Boualem},
 $\hat M^+$ is a closed submanifold of $Gr_k$ and 
$\hat {\mathcal F} ^+$ is a foliation on $\hat M^+$. (In fact, we only use
the result of Boualem in the case of an isoparametric foliation on the
flat $\R ^n$).

We claim that the  gauge $I_{g,g^+} :Gr_k  \to Gr_k $ sends $M$ to $\hat M ^+$.
As soon as the claim is verified, we deduce that 
$I_{g,g^+}$ sends $\hat {\mathcal F} $  to  $\hat {\mathcal F}^+$, because
$I_{g,g^+}$ is a bundle morphism, i.e., it commutes with the projection $p$.
Thus this claim would  imply that $\hat M$ is a smooth closed submanifold
and that  $\hat {\mathcal F}$ is a foliation on  $\hat M$.

 Thus it remains to prove the following

\begin{lem} 
Let $M$ be a manifold and let $\mathcal F$ be an infinitesimally polar
 singular Riemannian foliation
with respect to Riemannian metrics $g$ and $g^+$. 
Then $I_{g,g^+} :Gr_k \to Gr_k $ sends $\hat M$ to $\hat M^+$. 
\end{lem}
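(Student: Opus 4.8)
The plan is to reduce the lemma to the elementary computation with the gauge $I_{g,g^+}$ that was already carried out in this section. Both $\hat M$ and $\hat M^+$ are fibred subsets of $Gr_k$: over a point $x\in M$ the fibre $\hat M_x$ (resp. $\hat M^+_x$) consists of the $k$-dimensional horizontal sections of the homogeneous singular Riemannian foliation $T_x\mathcal F$ on the Euclidean space $(T_xM,g_x)$ (resp. $(T_xM,g^+_x)$); here $T_x\mathcal F$ is polar with respect to both metrics because $\mathcal F$ is infinitesimally polar with respect to both $g$ and $g^+$. Since $I_{g,g^+}:Gr_k\to Gr_k$ is a bundle automorphism covering the identity whose restriction to the fibre over $x$ is precisely $I_{g_x,g^+_x}$, it suffices to fix $x$ and to show $I_{g_x,g^+_x}(\hat M_x)\subseteq\hat M^+_x$.

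First I would give an explicit description of these fibres. A local section of $T_x\mathcal F$ is totally geodesic (Subsection \ref{polarsubsec}), hence, in the flat space $(T_xM,g_x)$, is an open piece of an affine subspace $p+U$ of $T_xM$; all its tangent spaces equal the same $k$-plane $U$, and this is how it is recorded as a point of $Gr_k(T_xM)$. If $p$ is a regular point of $T_x\mathcal F$ lying on this section then, since $\codim(T_x\mathcal F,T_xM)=\codim(\mathcal F,M)=k$ and the section meets the leaf through $p$ orthogonally, the plane $U$ must be the $g_x$-orthogonal complement of the tangent space $V_p$ of the leaf of $T_x\mathcal F$ through $p$; write this complement as $H_p(g_x)$. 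Conversely, polarity provides through every regular point $p$ a section, whose direction is then $H_p(g_x)$. As every horizontal section of $T_x\mathcal F$ meets the dense set of regular leaves, this gives $\hat M_x=\{\,H_p(g_x)\st p\text{ a regular point of }T_x\mathcal F\,\}$, and likewise $\hat M^+_x=\{\,H_p(g^+_x)\st p\text{ a regular point of }T_x\mathcal F\,\}$.

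The last step is the gauge identity. The identity $I_{g,g^+}(H_x(g))=H_x(g^+)$ of this section is proven purely pointwise, using only that $H_x$ is the orthogonal complement of the metric-independent vertical space; the same reasoning, applied to the leaves of $T_x\mathcal F$ (whose tangent spaces $V_p$ again do not depend on the adapted metric), yields $I_{g_x,g^+_x}(H_p(g_x))=H_p(g^+_x)$ for every $p\in T_xM$. Hence, for $V\in\hat M_x$, writing $V=H_p(g_x)$ with $p$ a regular point of $T_x\mathcal F$, we get $I_{g_x,g^+_x}(V)=H_p(g^+_x)\in\hat M^+_x$. This proves $I_{g,g^+}(\hat M)\subseteq\hat M^+$; running the same argument with $g$ and $g^+$ interchanged (using $I_{g^+,g}=I_{g,g^+}^{-1}$) gives the reverse inclusion, so in fact $I_{g,g^+}(\hat M)=\hat M^+$. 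The only point that genuinely needs care is the fibrewise description in the second paragraph — that a horizontal section of the homogeneous foliation $T_x\mathcal F$, viewed in $Gr_k$, is exactly the common tangent plane of an affine section — which is where total geodesy of sections, the codimension count, and polarity for both metrics are used; granted that, everything else is the one-line gauge identity.
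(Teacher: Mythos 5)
Your overall strategy is sound, and its second half is genuinely different from the paper's: after the same fibrewise reduction to the infinitesimal foliation $T_x\mathcal F$ with the two flat metrics $g_x,g^+_x$, the paper does not describe the fibres explicitly; it invokes \cite{Boualem} to know that $\hat M$ and $\hat M^+$ are closed in $Gr_k$ with $p^{-1}(M_0)$ open and dense in both, applies the gauge identity only over regular points of $M$, and concludes by continuity (plus the symmetry $I_{g,g^+}\circ I_{g^+,g}=\Id$ for equality). Your route replaces the density--closedness--continuity argument by a pointwise identification of the fibres with $\{H_p(g_x)\st p \text{ a regular point of } T_x\mathcal F\}$ and the algebraic identity $I_{g_x,g^+_x}(H_p(g_x))=H_p(g^+_x)$, which is more elementary and avoids citing Boualem for this particular lemma.

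However, there is a real gap exactly at the point you flag as the crux, namely the ``$\supseteq$'' half of the fibre description. In the paper, an element of $p^{-1}(x)\subset\hat M$ is a horizontal section $V\subset T_xM$ of $T_x\mathcal F$, i.e.\ a linear subspace which is itself a section (through the origin); cf.\ Subsection \ref{polarsubsec}, where horizontal sections of $T_x\mathcal F$ are linear subspaces of $T_xM$, and the fact that over a regular point the fibre is literally $\{H_x\}$. Your argument ``polarity provides through every regular point $p$ a section, whose direction is then $H_p(g_x)$'' only shows that $H_p(g^+_x)$ is the \emph{direction} of the (affine) $g^+$-section $p+H_p(g^+_x)$ through $p$; it does not show that the linear plane $H_p(g^+_x)$, based at the origin, is itself a horizontal section, which is what you need in the last step to conclude $I_{g_x,g^+_x}(V)=H_p(g^+_x)\in\hat M^+_x$. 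The missing ingredient is that $T_x\mathcal F$ is invariant under translations by vectors of the leaf $T_xL(x)$ through the origin (equivalently, that the infinitesimal foliation splits as the product of the single leaf $T_xL(x)$ with a homogeneous foliation on its normal space): every section meets the leaf $T_xL(x)$, say at $q$, and translating the section through $p$ by $-q$ then produces the linear section $H_p(g^+_x)$ through $0$. With this (standard, but nowhere proved or cited in your text) fact, your identification of the fibres, and hence your proof, is complete; without it you have proved the statement only for the set of directions of sections, whose coincidence with $\hat M^+$ as defined in Subsection \ref{gauge} is precisely what is being glossed over. Alternatively, you could close the gap the paper's way, by using closedness of $\hat M^+$ and density of the regular part together with continuity of $I_{g,g^+}$.
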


\begin{proof}
Choose a point $x\in M$. The singular foliation $T_x \mathcal F$ on the 
tangent space  $T_x M$ is defined independently of $g$ and $g^+$. The preimages
of $x$ in $\hat M$ and in $\hat M^+$ are defined only in terms of 
$T_x \mathcal F$, $g_x$ and $g_x ^+$, 
thus it is enough to prove the claim for the case 
$M=\mathbb R^n$, where $\mathcal F$ is a polar singular Riemannian foliation
 with respect to the flat
metrics $g$ and $g^+$ (by replacing $\mathcal F$ through $T_x \mathcal F$).
  In this case $\hat M$ and $\hat M^+$ are closed
submanifolds of $Gr_ k M$ and the regular part 
$p^{-1} (M_0 )$  is open and dense in both $\hat M$ and $\hat M ^+$
 (\cite{Boualem}).    By definition, $I_{g,g^+}$ sends 
$p^{-1} (M_0 ) \cap \hat M$ to $p^{-1} (M_0 ) \cap \hat M ^+$.

 By continuity, we deduce  $I_{g,g^+} (\hat M) \subset \hat M^+$.  Reversing 
the role of $g$ and $g^+$ and using that  $I_{g,g^+} \circ  I_{g^+,g} =Id$,
we deduce $I_{g,g^+} (\hat M) =  \hat M^+$. 
\end{proof}

\subsection{Regular vectors} Before we are going to define a Riemannian
structure on $\hat M$, we will need some observations concerning the space
of horizontal vectors. Let $\mathcal F$ be again a singular Riemannian
 foliation on a Riemannian manifold $(M,g)$. As in \cite{expl}, we denote by
 $D(g)$ 
the space of all unit horizontal vectors on $M$. By $D^0=D^0(g) \subset D(g)$
we denote the space of all {\it regular horizontal vectors}. Recall,
that a horizontal vector $v\in H_x$ is called regular if the horizontal
geodesic $\gamma ^v$ starting in the direction of $v$ contains at least
one regular point (\cite{expl}). 
In this case all but discretely many points on $\gamma ^v$
are regular.   Equivalently, one can say that a vector $v\in H_x$
is regular, if $v\in T_x M$ is a regular point of the infinitesimal
singular Riemannian foliation $T_x \mathcal F$.   Recall finally, that
$D^0 $ is a smooth, injectively immersed submanifold of the unit tangent
bundle $U^g M$ of $M$, that is invariant under the geodesic flow.

 If $\mathcal F$ is infinitesimally polar then a horizontal vector $v$ is 
regular  if and only if it is contained in only one horizontal section
$S$ of the isoparametric foliation $T_x \mathcal F$.  The assignment of the
section $S$ to the regular horizontal vector $v$ defines a map 
$m=m(g):D^0 \to \hat M$. We are going to prove that $m$ is a smooth submersion.

First, recall that for another Riemannian metric $g^+$ adapted to $\mathcal F$
we have an induced map $I_{g,g^+} :D (g)\to D (g^+)$ that is the restriction
of the smooth map $I_{g^+,g}$ between the unit tangent bundles 
$I_{g,g^+} : U^g M\to U^{g^+} M$ (induced by the fiber-wise linear isomorphisms
$ I_{g,g^+} : TM\to  TM$). 

\begin{lem}
 Let $\mathcal F$ be an infinitesimally polar singular Riemannian foliation
with respect to the Riemannian metrics $g$ and $g^+$. Then the map
$I_{g,g^+} :D (g)\to D (g^+)$ sends $D^0(g)$ to $D^0 (g^+)$.
\end{lem}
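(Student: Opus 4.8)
The plan is to reduce the claim to the already established structure at a single point, exactly as in the proof of the previous lemma. Since being a regular horizontal vector is characterized infinitesimally --- namely $v \in H_x$ is regular if and only if $v$, viewed as a point of $T_x M$, is a regular point of the infinitesimal foliation $T_x \mathcal F$ --- and since $I_{g,g^+}$ is fiber-preserving, it suffices to show that for the fixed point $x$ the linear gauge $I_{g_x, g^+_x} : T_x M \to T_x M$ sends regular points of $(T_x M, T_x \mathcal F, g_x)$ to regular points of $(T_x M, T_x \mathcal F, g^+_x)$. Thus I may assume $M = \R^n$, $\mathcal F$ is an isoparametric (hence polar) singular Riemannian foliation adapted to both flat metrics $g$ and $g^+$, and I must show $I_{g,g^+}$ carries $D^0(g)$ into $D^0(g^+)$.

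In this standard situation I would argue as follows. A unit $g$-horizontal vector $v \in H_x(g)$ is regular precisely when it lies in a unique horizontal section of $\mathcal F$; equivalently, when the map $m(g) : D^0(g) \to \hat M$ is defined at $v$, i.e.\ $v$ determines a single point of $\hat M$. By the preceding lemma, $I_{g,g^+}(\hat M) = \hat M^+$, and $I_{g,g^+}$ is a bundle morphism commuting with the projection $p : Gr_k \to M$, so it identifies the fiber $p^{-1}(x) \cap \hat M$ of sections of $T_x \mathcal F$ for the metric $g$ with the corresponding fiber for $g^+$. Since $I_{g,g^+}(H_x(g)) = H_x(g^+)$ and $I_{g,g^+}$ sends the section $S \ni v$ for $g$ to the section $I_{g,g^+}(S)$ for $g^+$, a $g$-horizontal vector contained in exactly one $g$-section is carried to a $g^+$-horizontal vector contained in exactly one $g^+$-section. (After rescaling to unit length, this just uses that $v \mapsto I_{g,g^+}(v)/|I_{g,g^+}(v)|_{g^+}$ is a fiberwise diffeomorphism of the horizontal unit spheres.) Hence regular vectors go to regular vectors, which is the assertion.

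The one point that requires a little care --- and which I expect to be the main (if modest) obstacle --- is matching up the two characterizations of ``regular'': the dynamical one (the geodesic $\gamma^v$ meets $M_0$) and the infinitesimal/polar one (uniqueness of the containing section). These are reconciled by the text preceding the lemma, which records that for an infinitesimally polar foliation a horizontal vector is regular iff it lies in only one horizontal section of $T_x \mathcal F$, and that regularity of $v \in H_x$ is equivalent to $v$ being a regular point of $T_x \mathcal F$. Granting these equivalences, the argument is purely a matter of transporting ``uniqueness of the section'' across the fiberwise isomorphism $I_{g,g^+}$, which the previous lemma has set up; no further analysis is needed.
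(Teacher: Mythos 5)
Your argument is correct and is essentially the paper's own proof: both rest on the characterization of $D^0$ as the set of horizontal vectors contained in precisely one infinitesimal horizontal section, together with the preceding lemma, which makes $I_{g,g^+}$ a bijection between the $g$-sections containing $v$ and the $g^+$-sections containing $I_{g,g^+}(v)$ (using $I_{g,g^+}\circ I_{g^+,g}=\Id$). Your extra reduction to the standard flat case is harmless but not needed, since the characterization and the previous lemma are already fiberwise/infinitesimal statements.
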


\begin{proof}
Since $I_{g,g^+}$ sends infinitesimal $g$-horizontal sections containing  a 
$g$-horizontal vector $v$ to infinitesimal $g^+$-horizontal sections
containing the $g^+$-horizontal vector $I_{g,g^+} (v)$, the result
follows from the characterization of $D^0$ as the set of all
horizontal vectors, contained in precisely one infinitesimal horizontal 
section. 
\end{proof}

\begin{quest}
 Is the statement  of the last lemma true for general singular Riemannian
foliations, that are not infinitesimally polar?
\end{quest}

 Let $M,\mathcal F, g, g^+$ be as in the lemma above, and let $\hat M$
and $\hat M ^+$ be the manifolds of horizontal infinitesimal sections
with respect to $g$ and $g^+$ respectively. We have the 
diffeomorphisms $I_{g,g^+} :D^0(g) \to D^0 (g^+)$ and 
$I_{g^+,g} :\hat M^+ \to \hat M$ and  the maps 
$m(g): D^0(g) \to \hat M$ and $m(g^+) :D^0 (g^+) \to \hat M^+$. 
By construction, the maps commute, i.e., $m(g) = 
I_{g^+,g} \circ m(g^+)\circ I_{g,g^+}$.  Therefore, $m(g)$ is a smooth
submersion if and only if $m(g^+)$ is a smooth submersion. Now we can prove:

\begin{lem}
Let $\mathcal F$ be an infinitesimal Riemannian foliation on a Riemannian
manifold $(M,g)$. Then the map $m(g) :D^0(g) \to \hat M $ is a smooth 
submersion. 
\end{lem}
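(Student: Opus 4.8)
The plan is to reduce the statement, exactly as in the preceding lemma, to the flat model and then to analyze that model. Since the assertion (smoothness and submersivity of $m(g)$) is local on $M$, hence on $D^0(g)$, we may pass to the standard case of Subsection~\ref{gauge}: $M$ is an open subset of Euclidean space, $g^+$ is flat, $\mathcal F$ is the restriction of an isoparametric foliation, and $g$ is a metric adapted to $\mathcal F$ (one may further replace $\mathcal F$ near a point $x_0$ by $T_{x_0}\mathcal F$ if convenient). By the commutation relation $m(g)=I_{g^+,g}\circ m(g^+)\circ I_{g,g^+}$ noted above, $m(g)$ is a smooth submersion if and only if $m(g^+)$ is; so it suffices to prove the assertion for the flat metric $g^+$, i.e. for an isoparametric foliation on an open subset of $\R^n$.

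Over the regular part there is nothing to do. Here $\hat M^+$ has dimension $n$, its regular part $p^{-1}(M_0)\cap\hat M^+=\{H_x\st x\in M_0\}$ being a smooth section of $p\colon\hat M^+\to M$ over $M_0$; and since every horizontal vector at a regular point is regular, the restriction of $m(g^+)$ to the (open) set of unit horizontal vectors over $M_0$ is the composition of the $S^{k-1}$-bundle projection onto $M_0$ with this section, hence a smooth submersion with $(k-1)$-dimensional fibres. As every point of $D^0(g^+)$ lies in the same geodesic-flow orbit as a point of this open set, it follows in particular that $\dim D^0(g^+)=n+k-1$. All the content is thus concentrated over the singular locus.

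To deal with it, let $\mathcal S\to\hat M^+$ be the tautological $k$-plane bundle, the restriction to the submanifold $\hat M^+\subset Gr_k$ of the tautological bundle of $Gr_k$; it is a smooth vector bundle, so its unit sphere bundle $S\mathcal S$ is a smooth manifold of dimension $n+k-1$ equipped with a smooth sphere-bundle projection $\pi_2\colon S\mathcal S\to\hat M^+$. Evaluation $\mathrm{ev}\colon S\mathcal S\to U^{g^+}M$, $(W,v)\mapsto v$, is smooth, being the restriction of the canonical map from the tautological sphere bundle of $Gr_k$, and takes values among the horizontal unit vectors; a horizontal unit vector is regular exactly when it lies in a unique infinitesimal section, i.e. exactly when it has a single $\mathrm{ev}$-preimage. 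Since regularity of a horizontal vector is an open condition, the regular locus $(S\mathcal S)^0:=\{(W,v)\in S\mathcal S\st v\in D^0(g^+)\}$ is open in $S\mathcal S$ and $\mathrm{ev}$ restricts to a continuous bijection $(S\mathcal S)^0\to D^0(g^+)$ between manifolds of equal dimension $n+k-1$. Granting that this bijection is a diffeomorphism we are done: by definition $m(g^+)(v)$ is the unique infinitesimal section containing $v$, so $m(g^+)=\pi_2\circ(\mathrm{ev}|_{(S\mathcal S)^0})^{-1}$, a composition of a diffeomorphism with a sphere-bundle projection restricted to an open set, hence a smooth submersion — and then so is $m(g)$.

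It remains to see that $\mathrm{ev}|_{(S\mathcal S)^0}$ is a diffeomorphism; being already a continuous bijection between manifolds of the same dimension, it suffices to show it is a local diffeomorphism, i.e. that $\mathrm{ev}$ is an immersion along $(S\mathcal S)^0$. This is the main obstacle, and it is a purely infinitesimal question at the singular footpoints (over a regular footpoint $\mathrm{ev}$ is evidently an immersion). The plan here is to use the local normal form of the isoparametric foliation near a singular point $x_0$, which identifies the transverse structure of $\mathcal F$ near $x_0$ with that of the polar slice representation $T_{x_0}\mathcal F$ and correspondingly presents $\hat M^+$, near a point over $x_0$, as a product of a neighbourhood in the space of (infinitesimal) sections of $T_{x_0}\mathcal F$ with a factor along the singular stratum through $x_0$; this reduces the immersivity of $\mathrm{ev}$ to the analogous infinitesimal statement for the polar representation $T_{x_0}\mathcal F$, where it rests on the equifocal (Bott-parallel) structure of the sections of a polar foliation together with the discreteness of the set of sections passing through a fixed \emph{regular} vector. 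Alternatively, for the flat standard case one may simply appeal to the polar case treated in \cite{Boualem} and \cite{tobenphd}, where $\hat M^+$ and the corresponding map are analyzed directly. Throughout, the closedness of $\hat M^+$ in $Gr_k$ from \cite{Boualem} is what guarantees that limits of infinitesimal sections are again infinitesimal sections.
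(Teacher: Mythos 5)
Your first reduction is exactly the paper's: the statement is local, the commutation $m(g)=I_{g^+,g}\circ m(g^+)\circ I_{g,g^+}$ lets you replace $g$ by the flat metric, and one may further pass to $\mathcal F=T_x\mathcal F$, so that everything happens for an isoparametric foliation on $\R^n$. After that, however, you do not prove the lemma: you reformulate it. Writing $m(g^+)=\pi_2\circ(\mathrm{ev}|_{(S\mathcal S)^0})^{-1}$ shifts all of the content into the claim that the evaluation map $\mathrm{ev}\colon S\mathcal S\to U^{g^+}M$ is an immersion (indeed a diffeomorphism onto $D^0$, compatibly with the smooth structure that $D^0$ already carries from \cite{expl}) along the locus over singular footpoints — and this is precisely the nontrivial part of the lemma, since over regular footpoints everything is clear. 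At this decisive point you only offer a plan (``use the local normal form \dots rests on the equifocal structure \dots alternatively appeal to \cite{Boualem}, \cite{tobenphd}''), with no precise statement being invoked or proved; neither reference is quoted in the paper for this purpose, and the paper proves the submersivity directly rather than citing it. There are also secondary wrinkles in your route: $D^0$ is a priori only an injectively immersed submanifold of the unit tangent bundle, so ``continuous bijection between manifolds of equal dimension, hence a homeomorphism by invariance of domain'' needs an argument that $\mathrm{ev}$ is continuous (and smooth) into $D^0$ with its own topology, and that the induced smooth structure agrees with the one used elsewhere in the paper (e.g.\ in Lemma \ref{distr}, where local sections of $m$ are composed with the geodesic flow).

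The paper's proof avoids all of this by a short observation you missed: in the flat model the Grassmannian bundle $Gr_k(\R^n)$ is canonically trivial, the infinitesimal sections are tangent to the affine sections of the isoparametric foliation, and with respect to the trivialization $m(\phi_t(v))=m(v)$ for all $t$, i.e.\ $m$ is invariant under the geodesic flow on $D^0$. Since every regular horizontal vector flows in arbitrarily small time to a vector whose footpoint is regular (all but discretely many points on a regular horizontal geodesic are regular), a neighborhood of any $v\in D^0$ is carried by $\phi_\epsilon$ into the part of $D^0$ over $M_0$, where $m$ is evidently a smooth submersion; flow-invariance then gives the claim at $v$. If you want to salvage your evaluation-map approach, you must actually prove the immersivity of $\mathrm{ev}$ at singular footpoints (for instance by exactly this kind of flow/translation argument), not defer it.
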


\begin{proof}
 The objects $m(g),D^0,\hat M$ are defined locally on $M$. Thus it is enough
to prove the statement in a neighborhood of each point $x$ in $M$.  This
reduces the question to the {\it standard case}. 
Then the observation preceding
this proposition reduces the question to the case $\mathcal F =T_x \mathcal F$.
Thus we may assume that $M$ is the Euclidean space $\mathbb R ^n$ and
that $\mathcal F$ is a polar singular Riemannian foliation on $\mathbb R^n$. 

  In this case the claim can be deduced as follows. Given a regular horizontal 
vector $v\in D^0$, choose a small number $\epsilon$ and a neighborhood 
$O$ of $v$ in $D^0$ such that $p (\phi _{\epsilon }(O))$ is contained in the
set of regular points of $M$. Here, $p:UM\to M$ is the projection from the
unit tangent bundle to $M$ and $\phi _t$ is the geodesic flow. The 
Grassmannian bundle
of $\R^n$ is a trivial bundle with a canonical trivialization. With
respect to this trivialization we have  $m (v)= m(\phi _t (v))$ for all 
$v\in D^0$ and   all $t$.  Thus $m$ is preserved by  the geodesic
 flow $\phi$, and the above choice of $O$ reduces the question to the regular
part of $M$. However, in the regular part $M_0$ of $M$ the claim is clear.
\end{proof}

\subsection{Normal distribution}
 We are going to define now, what is
going to be the normal distribution of the foliation $\hat {\mathcal F}$
with respect to the Riemannian metric $\hat g$ to be defined later.
Let $\mathcal F$ be an infinitesimally
polar singular Riemannian foliation on a Riemannian manifold $M$. 
(Since we are 
not going to use auxiliary metrics $g^+$ anymore, we are going to suppress
the Riemannian metric $g$ in the sequel). Let $\hat M$ be defined as in
Subsection \ref{gauge}.  Let $M_0$ be the regular part of $M$ and let
$\hat M _0$ be the preimage $p^{-1} (M_0)$. The restriction
 $p:\hat M  _0 \to M_0$ is a diffeomorphism, thus on $\hat M _0$ there is
a smooth distribution $\hat {\mathcal H} _0 $ that is sent by $p$ to the
horizontal distribution of the Riemannian foliation $\mathcal F$ on
the Riemannian manifold $M_0$.  We claim:

\begin{lem} \label{distr}
There is a unique smooth $k$-dimensional distribution $\hat {\mathcal H}$
on $\hat M$ that extends $\hat {\mathcal H} _0$. 
\end{lem}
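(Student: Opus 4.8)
The plan is to construct $\hat{\mathcal H}$ locally and then invoke uniqueness to glue. Since the statement is local on $M$, I reduce to the \emph{standard case} of Subsection \ref{gauge}: $M$ is an open subset of $\mathbb R^n$ carrying a flat metric $g^+$, $\mathcal F$ is (the restriction of) an isoparametric foliation, and $g$ is the given adapted metric. For the flat metric $g^+$ the desired distribution on $\hat M^+$ already exists by the work of Boualem (\cite{Boualem}) and T\"oben (\cite{tobenphd}): there $\hat{\mathcal F}^+$ is a genuine Riemannian foliation on $\hat M^+$ and its horizontal distribution $\hat{\mathcal H}^+$ is smooth and restricts over $M_0$ to the lift of the horizontal distribution of $(\mathbb R^n,\mathcal F,g^+)$. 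The idea is then to transport $\hat{\mathcal H}^+$ to $\hat M$ via the gauge. One subtlety: the gauge diffeomorphism $I_{g,g^+}\colon \hat M\to\hat M^+$ is a bundle morphism over $\mathrm{id}_M$, so it identifies $\hat M$ and $\hat M^+$ as foliated manifolds, but it is \emph{not} an isometry and in particular need not send the $g^+$-horizontal distribution of $(\mathbb R^n,\mathcal F,g^+)$ to the $g$-horizontal one on $M_0$. So I cannot simply pull back $\hat{\mathcal H}^+$.

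Instead I would build $\hat{\mathcal H}$ directly over $\hat M$ using the submersion $m=m(g)\colon D^0(g)\to\hat M$ established in the previous lemma. Here is the mechanism. Fix $\xi\in\hat M$ over $x\in M$; $\xi$ is an infinitesimal horizontal section $S\subset T_xM$ of $T_x\mathcal F$. Pick a regular horizontal vector $v\in S\cap H_x$ with $m(v)=\xi$; since $m$ is a submersion we have the exact sequence $0\to T_v(\text{fibre})\to T_vD^0\xrightarrow{D_vm}T_\xi\hat M\to0$. Now $D^0(g)$ carries the geodesic flow, and (as in the proof that $m$ is a submersion) $m$ is flow-invariant. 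A short geodesic segment $\gamma^v$ reaches the regular part $M_0$; over $M_0$ the projection $p\colon \hat M_0\to M_0$ is a diffeomorphism and $\hat{\mathcal H}_0$ is defined. The recipe for $\hat{\mathcal H}_\xi$: take the parallel transport (along the $m$-image of the flow line, equivalently along $\gamma^v$ lifted into $\hat M$) of the plane $\hat{\mathcal H}_0$ at the regular endpoint back to $\xi$. To make this precise and metric-free, I would instead characterise $\hat{\mathcal H}$ as follows — $\hat{\mathcal H}_\xi := D_vm\big(\widehat{\mathcal X}_v\big)$ where $\widehat{\mathcal X}_v\subset T_vD^0$ is the span of the geodesic-flow-invariant lifts of the $k$-dimensional horizontal distribution, which over the regular part is simply the canonical horizontal lift; one checks this is independent of the choice of regular $v\in S\cap H_x$ because any two such are joined inside $p^{-1}(\text{one leaf of }T_x\mathcal F)$ and the construction is constant along $\hat{\mathcal F}$-plaques over $M_0$ by definition, hence along their closures. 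Smoothness of $\hat{\mathcal H}$ then follows from smoothness of $m$, of the geodesic flow, and of $\hat{\mathcal H}_0$, since locally $\hat{\mathcal H}$ is the pushforward under the submersion $m$ of a smooth distribution on $D^0$.

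Uniqueness is the easy end: $\hat M_0=p^{-1}(M_0)$ is open and dense in $\hat M$ (this was recorded in Subsection \ref{gauge}, from \cite{Boualem}), two smooth $k$-dimensional distributions agreeing on a dense open set agree everywhere by continuity, so any extension of $\hat{\mathcal H}_0$ is forced; in particular the local pieces constructed above automatically glue to a global $\hat{\mathcal H}$ on $\hat M$. The main obstacle I anticipate is the well-definedness and smoothness in the previous paragraph: showing that the flow-invariant lift of $\hat{\mathcal H}_0$ through singular fibres of $m$ does not depend on which regular horizontal vector $v$ of the section one flows along, and patching the two extreme cases ($\mathcal F$ genuinely isoparametric on $\mathbb R^n$ versus a general adapted metric $g$). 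For the former I would lean on the description of regular vectors: a regular $v$ lies in exactly one infinitesimal section, the leaf $L^v$ of $T_x\mathcal F$ through $v$ consists of regular vectors all lying in that same section, so the whole family $\{m(w):w\in L^v\}$ is the single point $\xi$, and flowing along geodesics $\gamma^w$ gives the same plane at $\xi$ because over $M_0$ everything is the honest horizontal distribution of a Riemannian foliation, transported consistently by parallel-type transport along the leaf closure; the consistency over the singular locus is exactly what the smooth submersion $m$ buys us. For the latter I would reduce $g$ to $g^+$ not by the gauge on distributions but by the commuting square $m(g)=I_{g^+,g}\circ m(g^+)\circ I_{g,g^+}$ together with the fact that $I_{g,g^+}$ is flow-equivariant only up to reparametrisation — so I would rather do the construction intrinsically for $g$ as above and use the $g^+$-case merely to know $\hat M$, $\hat{\mathcal F}$ and $\hat M_0\subset\hat M$ have the asserted topological properties.
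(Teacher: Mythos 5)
Your uniqueness argument and your choice of tools (the submersion $m\colon D^0\to\hat M$, the geodesic flow, density of $\hat M_0$ to glue local pieces) are the right ones, and you are correct to discard the idea of transporting $\hat{\mathcal H}^+$ by the gauge $I_{g,g^+}$. But the existence part has a genuine gap, and it sits exactly where you flag "the main obstacle": your construction presupposes a smooth, geodesic-flow-invariant $k$-dimensional distribution $\widehat{\mathcal X}$ on all of $D^0$, defined over singular vectors by flow-invariant continuation of "the canonical horizontal lift" from the regular part. Producing such an extension over the vectors lying above the singular strata, and proving it is smooth and independent of the regular time used to continue it, is essentially the same extension problem as the lemma itself, only transplanted from $\hat M$ to $D^0$; so the claim "smoothness of $\hat{\mathcal H}$ follows since locally it is the pushforward under $m$ of a smooth distribution on $D^0$" is circular as stated. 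Likewise, the independence of $\hat{\mathcal H}_\xi=D_vm(\widehat{\mathcal X}_v)$ from the choice of regular $v\in S$ is only asserted ("transported consistently by parallel-type transport along the leaf closure"), not proved, and you would additionally need injectivity of $D_vm$ on $\widehat{\mathcal X}_v$ to get dimension $k$.

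The paper's proof avoids all of these issues by never lifting a distribution to $D^0$ at all. Fix $S\in\hat M$ over $x$, choose a basis $w_1,\dots,w_k$ of $S$ consisting of regular vectors, and for each $w_i$ a smooth local section $n_i\colon O\to D^0$ of the submersion $m$ with $n_i(S)=w_i$. The only object flowed is the geodesic spray $X$ restricted to $D^0$ (a single canonical smooth vector field, tangent to $D^0$ by flow-invariance): setting $\xi_i(\bar S,t)=m(\phi_t(n_i(\bar S)))$ and $W_i(\bar S)=\tfrac{d}{dt}\big|_{t=0}\xi_i(\bar S,t)$, smoothness of $W_i$ is automatic from smoothness of $m$, $\phi_t$, $n_i$; the $\xi_i$-trajectories project under $p$ to horizontal geodesics, so over $O\cap\hat M_0$ each $W_i$ is a section of $\hat{\mathcal H}_0$; and $p_\ast W_i(S)=w_i$ gives linear independence near $S$. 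No canonicity of this local construction is needed, because the uniqueness statement (your density argument) forces the local spans to agree on overlaps and hence to glue. If you replace your flow-invariant lift $\widehat{\mathcal X}$ by this "spray pushed down along local sections $n_i$" device, your argument closes; as it stands, the key step is missing.
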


\begin{proof}
 The uniqueness is clear, since $\hat M_0$ is dense in $\hat M$. 
In order to prove the existence, it is enough to show that for each
element $S\in \hat M$ there are $k$ linearly independent smooth vector
fields $W_i$ defined on an open neighborhood  $O$ of $S$ in $\hat M$,
 such that the restriction of each $W_i$ to 
$O\cap \hat M_0$ is a section of $\hat {\mathcal H} _0$.

  Thus, let $S\in \hat M$ be given and let $x=p(S) \in M$ be the foot point
of $S$. Let $w \in T_x M$ be a regular unit horizontal vector contained
in $S$. Since the map $m:D^0 \to \hat M$ is a smooth submersion, we find
an open neighborhood $O$ of $S$ in $\hat M$ and a
 smooth section $n:O\to D^0$  with $m\circ n=\Id$ and $n(S)=w$.  

 Let $I$ be a small interval around $0$. Consider the map 
$\bar \xi :O\times I \to D^0$ given by 
$\bar \xi (\bar S, t) = \phi _t (n (\bar S))$, where $\phi _t$ denote
the restriction of the geodesic flow to $D^0$.  By construction,
$\bar \xi $ is a smooth map. This implies smoothness of the composition
$\xi  :O\times I  \to O$ given by $\xi  = m\circ \bar \xi$.
By construction, $\xi  (\bar S, 0) =\bar S$ for all $\bar S \in O$.
Therefore, the map $$W(\bar S) := \frac d {dt} \xi  (\bar S ,t)$$ 
is a smooth vector field on $O$.

 Now, the map $m:D^0 \to \hat M$ commutes with the projections to $M$,
i.e.,  $p(m(v))=p(v)$ for all $v\in D^0$. Thus the projection of any 
$\xi$-trajectory to $M$ is the projection of the corresponding $\bar \xi$
trajectory to $M$. By definition, $\bar \xi$-trajectories are flow lines
of the geodesic flow. Thus the  $\xi$-trajectory of 
a point $\bar S \in O$ is sent
by the projection $p:\hat M\to M$ to the regular  horizontal geodesic that
starts at $p$ in the direction $n (\bar S)$. In particular, we deduce
that the restriction of $W$ to $M_0$ is a section of $\hat H_0$. Moreover,
by construction, $p_{\ast} (W(\bar S))= w$.  
 
 Now, we choose a basis $w_i$ of $S$ that consists of regular vectors and 
applying the above construction, we get the linearly independent smooth
vector fields $W_i$, we were looking for.
\end{proof}

\subsection{Riemannian structure}  \label{riemsub}

Now we are in position to define 
the right  Riemannian structure $\hat g$ on $\hat M$.  We start 
with the canonical Riemannian metric $h$ on the Grassmannian bundle 
$Gr_k (M)$ (cf. \cite{tobenphd} for its definition and properties)
and denote by the same letter $h$ its restriction to the submanifold $\hat M$.
The projection $p: (Gr_k ,h)\to (M,g)$ is a Riemannian submersion.
In particular, the restriction $p:(\hat M ,h) \to (M,g)$ is $1$-Lipschitz.

 Let $\hat {\mathcal H}$ be the distribution of $k$-dimensional 
spaces on $\hat M$ defined in the previous subsection.  In the proof
of  \lref{distr} we have seen that for each $S\in \hat M$ it is possible
to choose a base $W_1,...,W_n$ of $\hat {\mathcal H} (S)$ that are mapped
by the differential $p_{\ast}$ to a base of $S \subset T_{p(S)} M$. In
particular, for each $S\in \hat M$, the restriction of 
$p_{\ast} :T_S \hat M \to T_{p(S)} M$ sends $\hat {\mathcal H} (S)$ bijectively
to $S\subset T_{p(S)} M$.  Since $S$ is normal
to the leaf of $\mathcal F$ through $p(S)$, we deduce that $\hat {\mathcal H}$
and $\hat {\mathcal F} $ are transversal.

 Now we define the Riemannian metric $\hat g$ on $\hat M$ uniquely
by the following three properties. On $\hat {\mathcal F}$ we let 
$\hat g$ coincide with the canonical metric $h$. We require 
$\hat {\mathcal F}$ and $\hat {\mathcal H}$ to be orthogonal with
respect to $\hat g$. Finally,
on $\hat {\mathcal H}$ we define $\hat g$ such that $p_{\ast}$ induces 
an isometry between $\hat {\mathcal H (S)}$ and $S$, for all elements 
$S\in \hat M$. In other words, we set 
$\hat g (v,w) = g(p_{\ast} (v), p_{\ast} (w))$, for all $v,w \in \hat 
{\mathcal H} (S)$.  

 By construction, $\hat g$ is a smooth Riemannian metric on $\hat M$. For
each point $S\in \hat M$, the differential $p_{\ast}$ sends the orthogonal
subspaces $\hat {\mathcal F} (S)$ and $\hat {\mathcal H} (S)$ to orthogonal
subspaces of $T_{p(S)} M$ and the restrictions of $p_{\ast}$ to
 $\hat {\mathcal F} (S)$ and to   $\hat {\mathcal H} (S)$ are $1$-Lipschitz.
Therefore, the map $p:(\hat M,\hat g) \to (M,g)$ is $1$-Lipschitz.  

On the regular part $\hat M _0$ the foliation $\hat {\mathcal F}$ is
a Riemannian foliation with respect to the
metric $\hat g$. (If $\hat M_0$ and $M_0$ are identified via the
diffeomorphism $p : \hat M_0 \to M_0$, the metric $\hat g$ arises from the
metric $g$ by changing $g$ only on $\mathcal F$ and by leaving the metric
on the normal part unchanged). Since $\hat M _0$ is dense in $\hat M$,
the foliation $\hat {\mathcal F}$ is a Riemannian foliation
on the whole manifold $(\hat M, \hat g)$.  

 By construction, $p_{\ast}$ sends horizontal vectors on $\hat M$ to
horizontal vectors on $M$ of the same length; therefore,
 $p$ preserves transverse
length of curves. Thus $p:(\hat M,\hat {\mathcal F})  \to (M,\mathcal F)$
is a geometric resolution.

\subsection{Proof of \tref{mainthm}}
Now we can finish the proof of \tref{mainthm}. If $(M,\mathcal F)$ admits a
geometric resolution, then $\mathcal F$ is infinitesimally polar, as was
shown in  Section \ref{onlyif}. 

Let now $\mathcal F$ be infinitesimally polar. Consider the manifold $\hat M$
with the foliation $\hat {\mathcal F}$ defined in Subsection \ref{gauge} and
let $F:\hat M\to M$ be the canonical projection $p$. Let $\hat g$ be the
Riemannian metric on $\hat M$ defined in Subsection \ref{riemsub}.  As we have
seen, $\hat {\mathcal F}$ is a Riemannian foliation on the Riemannian manifold
$(\hat M, \hat g)$ and $F:\hat M \to M$ is a geometric resolution.

 We have seen in Subsection \ref{riemsub}, that the map $F$ is $1$-Lipschitz.
By construction, the leaves of $\hat {\mathcal F}$ are
 preimages of leaves of $\mathcal F$, thus 
$p$ induces a bijection between spaces of leaves. Moreover, by construction,
the preimage of a compact subset $K$ on $M$ is a closed subset of a compact
subset of the Grassmannian bundle $Gr_k (M)$. Thus the map $F$ is proper.

 If $M$ is  compact then $\hat M$ is compact, since $F$ is proper.
Since $F$ is $1$-Lipschitz, a ball of radius $r$ around a point 
$S\in \hat M$ is contained in the preimage of the ball of radius $r$ around
$F(S)$ in $M$. If $M$ is complete, the properness
of $F$ implies that all balls in $\hat M$ are compact. Therefore,
 $\hat M$  is complete in this case.

  The objects ($\hat M, \hat {\mathcal F}, \hat g$)
are defined only in terms of $M,\mathcal F$ and $g$. Therefore, they are 
invariant under isometries of $(M,\mathcal F)$. This proves the statement about
$\Gamma$-equivariance. The claim about singular Riemannian foliations
$\mathcal F$ given by orbits of an isometric action of a group $G$ 
is a direct consequence of the last  claim.

 Assume now that $M$ and therefore $\hat M$ are complete. The notion 
of the absence of  horizontal conjugate point is a transverse notion , 
i.e., it can be formulated only in terms of local quotients (cf. \cite{expl}).
Since the transverse geometries of $(M,\mathcal F)$ and
of $(\hat M,\hat {\mathcal F})$ coincide, due to the definition of a geometric
resolution, the singular Riemannian foliation
$\mathcal F$ has no horizontal conjugate points if and only if the Riemannian
foliation 
$\hat {\mathcal F}$ has no horizontal conjugate points.

Identifying the regular part $\hat M_0$ with $M_0$ via $F$, we see that,
by construction, the horizontal distributions of $\mathcal F$ with respect to
the metrics $g$ and $\hat g$ coincide.  Thus, one of them is integrable if and
only if the other one is integrable. The integrability of the normal 
distribution on the regular part is equivalent to polarity 
(\cite{Alexandrino1}). This
shows that $\mathcal F$ is polar if and only if $\hat {\mathcal F}$ is polar.

This finishes the proof of \tref{mainthm}.

\section{Simplicity in the regular part} \label{toposec}

We are going to prove \tref{topothm} in a slightly more general
setting that we are going to describe now.

\begin{defn}
A singular Riemannian foliation on a Riemannian manifold $M$ is  {\bf full}
if for each leaf $L$ there is some $\epsilon >0$ such that $\exp (\epsilon v)$
is defined for each unit vector in the normal bundle  $L$.
\end{defn}

Each singular Riemannian foliation on a complete  
Riemannian manifold is full. In a full singular Riemannian foliation
 each pair of leaves is equidistant.  If $\mathcal F$ is full
on $M$ and if $U \subset M$
is an open subset  that is a union of leaves 
of $\mathcal F$ then the restriction of $\mathcal F$ to $U$ is   full again
(this follows from \cite{expl}, Proposition 4.3).
Moreover, for each covering $N$ of $M$ the lift of $\mathcal F$ to $N$
is full on $N$.

If $\mathcal F$ is a full singular Riemannian foliation
 on a Riemannian manifold $M$ with all leaves closed,
then $M/\mathcal F$ is a metric space, with  a natural inner metric
that has curvature locally bounded below in the sense of Alexandrov.
Note that an isometry of such a space is uniquely determined by its restriction
to an open subset.
Finally, a full regular Riemannian foliation 
is simple, i.e., has closed leaves with trivial holonomy, if and only if 
the quotient $M/\mathcal F$ is a Riemannian manifold.

 Let now $\mathcal F$ be a full singular Riemannian foliation
 on a connected Riemannian manifold $M$, with $\pi_1 (M) =\Gamma$.  Let
$\tilde M$ be the universal covering of $M$ and let $\tilde {\mathcal F}$ 
be the 
lifted singular Riemannian foliation on $\tilde M$. 
Assume that $\tilde {\mathcal F}$ has closed
leaves and denote by $B$ the quotient space  $\tilde M /\tilde {\mathcal F}$.
 The fundamental group   
$\Gamma$ acts on $(\tilde M,\tilde {\mathcal F})$. Thus we get
an induced action of $\Gamma$ on the quotient $B$.
 Denote by $\Gamma_0$ the kernel
of the action of $\Gamma$ on $B$, i.e., the set  of all elements of 
$\Gamma$ that act trivially on $B$.

\begin{lem} \label{simplelift}
In the notations above let $g\in \Gamma$ be an element. Then the following
are equivalent:
\begin{enumerate}
\item $g\in\Gamma _0$;

\item Each leaf $L$ of $\mathcal F$ contains a closed curve whose free 
homotopy class is the conjugacy class of $g$;

\item  There is a non-empty  open subset $U$ in $M$ such that each leaf $L$ of 
$\mathcal F$,  which has  a non-empty intersection with $U$, contains a
closed curve   whose free 
homotopy class is the conjugacy class of $g$.
\end{enumerate} 
\end{lem}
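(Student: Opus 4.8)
The plan is to prove the cycle of implications $(1)\Rightarrow(2)\Rightarrow(3)\Rightarrow(1)$, the only substantial step being the last one. For $(1)\Rightarrow(2)$: if $g\in\Gamma_0$ then $g\tilde L=\tilde L$ for every leaf $\tilde L$ of $\tilde{\mathcal F}$. Given a leaf $L$ of $\mathcal F$, I would choose a leaf $\tilde L$ of $\tilde{\mathcal F}$ over $L$ and a point $\tilde p\in\tilde L$ with image $p\in L$; since $\tilde L$ is connected and $g$-invariant, a path inside $\tilde L$ from $\tilde p$ to $g\tilde p$ projects to a loop $c$ in $L$ based at $p$, and under the identification $\pi_1(M,p)\cong\Gamma$ determined by $\tilde p$ the class of $c$ is $g$, so the free homotopy class of $c$ is the conjugacy class of $g$. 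The implication $(2)\Rightarrow(3)$ is immediate, with $U=M$.

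For $(3)\Rightarrow(1)$ I would first record the translation into the geometry of $B$: the points of $B$ lying over a leaf $L$ of $\mathcal F$ are the leaves $\gamma\tilde L$, $\gamma\in\Gamma$; the stabilizer of $[\tilde L]\in B$ equals the image of $\pi_1(L)\to\Gamma$ (it is the deck group of the covering $\tilde L\to L$); and $g$ fixes $[\gamma\tilde L]$ precisely when $\gamma^{-1}g\gamma$ lies in that image. Hence, for a leaf $L$, the condition ``$L$ contains a closed curve whose free homotopy class is the conjugacy class of $g$'' is equivalent to ``$g$ fixes some point of $B$ lying over $L$''.

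Now let $U$ be as in $(3)$. Since $(3)$ is preserved when $U$ is shrunk, and since the set $B^{\mathrm{prin}}$ of points of $B$ with minimal $\Gamma$-stabilizer is open, dense, connected and meets the manifold part of $B$ lying over the regular stratum $M_0$, I may assume that $U$ is a small, connected, simply connected open subset of $M_0$ lying over $B^{\mathrm{prin}}$ and so small that each leaf of $\mathcal F$ meets $U$ in at most one plaque. I would then fix a lift $\hat U\subset\tilde M$ of $U$; it maps homeomorphically onto an open set $\hat V\subset B$, and the stabilizer $\Gamma_b$ is one and the same subgroup $H$ for all $b\in\hat V$, because over $B^{\mathrm{prin}}$ the (minimal) isotropy group of a point fixes a whole neighbourhood pointwise. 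By $(3)$ and the translation above there is $\beta_0\in\Gamma$ with $\beta_0^{-1}g\beta_0\in H$; but then $\beta_0^{-1}g\beta_0\in H=\Gamma_b$ for every $b\in\hat V$, so $g$ fixes every point of the non-empty open set $\beta_0\hat V\subset B$. Since $g$ acts on $B$ by an isometry and an isometry of $B$ is determined by its restriction to any non-empty open subset, $g=\mathrm{id}_B$, i.e.\ $g\in\Gamma_0$.

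The delicate point is this last implication: the hypothesis only produces a single fixed point of $g$ over each leaf meeting $U$, and a priori these points sit over different sheets of $\tilde M\to M$, so a naive argument exhibits only a ``multisection'' of $\operatorname{Fix}(g)$ rather than an open piece of it. Localising over $B^{\mathrm{prin}}$, where the $\Gamma$-stabilizers are locally constant, is precisely what lets a single conjugating element $\beta_0$ serve all nearby leaves simultaneously and thereby produces an honest open subset of $B$ fixed pointwise by $g$. The auxiliary facts needed — that the above translation holds verbatim, that $B^{\mathrm{prin}}$ is open, dense, connected and meets the part of $B$ over $M_0$, and that $\Gamma_b$ is genuinely constant (not merely constant up to conjugacy) on a small connected neighbourhood inside $B^{\mathrm{prin}}$ — follow from the local structure of Riemannian foliations and the principal isotropy theorem for isometric actions, and I regard them as routine.
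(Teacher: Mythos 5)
Your translation of the problem into the action of $\Gamma$ on $B$ (the stabilizer of $[\tilde L]$ is the image of $\pi_1(L)\to\Gamma$, and ``$L$ carries a loop freely homotopic to $g$'' means ``some conjugate of $g$ fixes some point of $B$ over $L$''), as well as the implications $(1)\Rightarrow(2)\Rightarrow(3)$, coincide with the paper's argument and are fine. The gap is in your step $(3)\Rightarrow(1)$, precisely in the facts you dismiss as routine: that the set $B^{\mathrm{prin}}$ of points with minimal $\Gamma$-stabilizer is open and dense, and that the stabilizer is \emph{constant} (one subgroup $H$) on a small connected open subset of it. These are consequences of the slice/principal isotropy theorem, which requires a \emph{proper} action. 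But the action of $\Gamma=\pi_1(M)$ on $B=\tilde M/\tilde{\mathcal F}$ is not proper in general: the lemma only assumes the \emph{lifted} foliation is closed, so for instance the irrational linear foliation on a flat torus gives $B=\R$ with $\Gamma=\Z^2$ acting through a dense, non-discrete group of translations. For non-proper isometric actions of countable groups the structure you invoke can fail outright (stabilizers may be nontrivial exactly on a dense set of empty interior, so no open set with constant stabilizer exists), and nothing in your write-up rules this behaviour out in the present setting. Since the constancy of $\Gamma_b$ on $\hat V$ is exactly what lets one conjugating element $\beta_0$ serve all nearby leaves, the argument does not go through as written.

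The paper avoids this issue entirely: $\Gamma$ is countable, so $g$ has only countably many conjugates, each fixing a \emph{closed} subset of $B$; by hypothesis $(3)$ their union contains the non-empty open set $V$ (the image of the preimage of $U$), so by Baire's theorem some conjugate of $g$ fixes a set with non-empty interior; since $B$ is an inner metric space with curvature locally bounded below, an isometry of $B$ is determined by its restriction to an open set, so that conjugate, and hence $g$ itself, acts trivially on $B$. If you want to keep your strategy, you would have to actually prove a local constancy statement for the stabilizers of the (in general non-proper) $\Gamma$-action on $B$, which is a genuinely nontrivial claim; replacing that step by the Baire category argument is both shorter and needs no structure theory of the action.
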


\begin{proof}
Let $\tilde L$ be a leaf of $\tilde {\mathcal F}$ 
through a point $y\in \tilde M$.
Then the translate  $gy$ is contained  in $\tilde L$ if and only if
$g$ fixes the point $\tilde L \in B$. On the other hand, if $gy$   is 
contained in $\tilde L$ then connecting $y$ and $gy$ by a curve in $\tilde L$
one obtains  a closed curve in the image  $L$ of $\tilde L$ in $M$
whose free homotopy class is in the conjugacy class of $g$. Note that
this image $L$ is a leaf of $\mathcal F$.

 Let  $L$ be  a leaf  in $M$ that contains a closed curve $\gamma$ whose
free homotopy class is in the conjugacy class of $g$. Then each lifted
leaf $\tilde L$ of $L$ contains a lift of the curve $\gamma$.  Thus, in this 
case, each lift $\tilde L$ of the leaf $L$ is fixed by some conjugate of $g$.

 Now the implications $1\Longrightarrow 2\Longrightarrow 3$ are clear. 
Assume $3$. Let $\tilde U$
be the preimage of $U$ in $\tilde M$ and $V$ the projection of $\tilde U$ to 
$B$.    Then $V$ is a non-empty  open subset of the quotient $B$ and each point
in $V$ is fixed by some conjugate of $g$.  There are only 
countably many conjugates of
$g$, each of them fixing a closed subset of $B$. By Baire's theorem, at least
one conjugate of $g$ fixes a non-empty open subset of $V$. 
 Since $B$ is an inner metric space
with  curvature locally bounded from below, $g$ fixes all of $B$. 
Therefore, $g\in \Gamma _0$.
\end{proof}

The following result  generalizes  \tref{topothm}.

\begin{prop}
Let $\mathcal F$ be a full singular Riemannian foliation on a simply
connected Riemannian  manifold $M$.  Let $M_0$ denote the regular stratum 
of $M$ and let the Riemannian foliation
$\mathcal F_0$ be the restriction of $\mathcal F$ to $M_0$. Assume that the lift
$\tilde {\mathcal F _0}$ of $\mathcal F_0$ to the universal covering $\tilde M_0$ is closed.
Then $\mathcal F_0$ is closed as well and the canonical projection $\tilde M_0 / \tilde {\mathcal F_0} \to M_0 /\mathcal F_0$
is an isometry. 
In particular, if $\tilde {\mathcal F_0}$ is a simple Riemannian foliation 
then $\mathcal F_0$ is a simple Riemannian foliation  on $M_0$.  
\end{prop}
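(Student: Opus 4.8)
The plan is to deduce this from the structure of $\Gamma_0$ identified in \lref{simplelift}, applied to the manifold $M_0$ (which is connected but not simply connected, so the hypotheses of \lref{simplelift} and the surrounding setup apply to $\mathcal F_0$ on $M_0$ rather than to $\mathcal F$ on $M$). Write $\Gamma = \pi_1(M_0)$, let $\tilde M_0$ be the universal cover, and let $\tilde{\mathcal F_0}$ be the lifted foliation, which is assumed closed with quotient orbifold $B = \tilde M_0 / \tilde{\mathcal F_0}$. As in the discussion preceding \lref{simplelift}, $\Gamma$ acts on $B$ (by isometries of an inner metric space with curvature locally bounded below), and $\Gamma_0$ is the kernel of this action. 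The key claim will be that $\Gamma_0 = \Gamma$, i.e. the whole fundamental group of $M_0$ acts trivially on $B$; once this is known, the quotient $B/\Gamma = M_0/\mathcal F_0$ equals $B$ itself, so $M_0/\mathcal F_0$ is already a (Hausdorff) metric space, the leaves of $\mathcal F_0$ are closed, and the projection $\tilde M_0/\tilde{\mathcal F_0} \to M_0/\mathcal F_0$ is an isometry.

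To prove $\Gamma_0 = \Gamma$, I would use criterion (3) of \lref{simplelift}: it suffices to find a single non-empty open $U \subset M_0$ such that every leaf of $\mathcal F_0$ meeting $U$ contains a closed curve in each prescribed free homotopy class of $M_0$ — equivalently, such that the inclusion-induced map $\pi_1(L, x) \to \pi_1(M_0, x)$ is surjective for $x$ in $U$ and $L$ the leaf through $x$. This is exactly where the fact that $M$ is simply connected (and $\mathcal F$ is full on $M$) enters. Near a regular point, a distinguished tubular neighborhood $U$ in $M$ retracts onto a leaf $L_0$ of $\mathcal F$; since $M$ is simply connected and the singular strata of $\mathcal F$ have codimension $\geq 2$ in $M$, one expects $M_0 = M \setminus (\text{singular part})$ to still be simply connected, or at least that its fundamental group is generated by loops that can be pushed into regular leaves. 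More precisely: taking any loop $\gamma$ in $M_0$, it is null-homotopic in $M$; a generic null-homotopy meets the codimension-$\geq 2$ singular set transversally, hence misses it, so $\gamma$ is already null-homotopic in $M_0$ — thus $M_0$ is simply connected, $\Gamma$ is trivial, and the statement collapses to the assertion that $\tilde{\mathcal F_0} = \mathcal F_0$.

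Wait — that reasoning shows $M_0$ is simply connected only if transversality to the singular set can be arranged, which requires the singular set to be contained in a subcomplex of codimension $\geq 2$; this is true since each singular stratum is an embedded submanifold of codimension $\geq 2$, but a homotopy is a map of a $2$-complex and the singular set is a countable union of strata, so a small perturbation suffices. Hence $\Gamma$ is trivial, $\tilde M_0 = M_0$ and $\tilde{\mathcal F_0} = \mathcal F_0$, and there is genuinely nothing to prove beyond this identification. I therefore expect the real content of the proposition — and the step I would treat most carefully — to be precisely this transversality/general-position argument showing $\pi_1(M_0) \to \pi_1(M)$ is an isomorphism (so that "$\tilde{\mathcal F_0}$ closed" already means $\mathcal F_0$ closed): one must check that the singular set $M \setminus M_0$, being a locally finite union of embedded submanifolds each of codimension at least $2$, does not obstruct homotoping loops and null-homotopies off of it. The final two sentences of the proposition then follow formally: closedness of $\mathcal F_0$ plus the isometry $\tilde M_0/\tilde{\mathcal F_0} \to M_0/\mathcal F_0$ is immediate, and if moreover $\tilde{\mathcal F_0}$ is simple — i.e. its quotient is a Riemannian manifold — then the isometric quotient $M_0/\mathcal F_0$ is a Riemannian manifold too, so $\mathcal F_0$ is simple by the characterization of simplicity for full regular Riemannian foliations recalled above.
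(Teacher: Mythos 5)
Your overall framework is the right one (apply \lref{simplelift} to the full foliation $\mathcal F_0$ on $M_0$ and show that all of $\Gamma=\pi_1(M_0)$ lies in the kernel $\Gamma_0$ of the action on $B=\tilde M_0/\tilde{\mathcal F_0}$), but the step you single out as ``the real content'' is false, and it is exactly where the actual content of the proposition lies. A null-homotopy is a map of a $2$-dimensional disc; making it transverse to a codimension-$2$ submanifold does \emph{not} make it disjoint from it, it makes the intersection a set of dimension $2+(n-2)-n=0$, i.e.\ isolated points. Pushing loops and homotopies off the singular set by general position only works for strata of codimension $\geq 3$. Consequently $M_0$ need not be simply connected: for the foliation of $M=\R^2$ by concentric circles (orbits of the rotation action), the singular set is the origin, of codimension $2$, and $M_0=\R^2\setminus\{0\}$ has $\pi_1=\mathbb Z$. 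So your conclusion that $\Gamma$ is trivial and ``there is genuinely nothing to prove'' cannot be salvaged; the proposition is precisely about the situation where $\pi_1(M_0)\neq 1$.

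What is missing is the geometric input that handles the codimension-$2$ strata. The paper first discards the strata of codimension $\geq 3$ (this is where your transversality argument is legitimately used: it changes neither the hypotheses nor $\pi_1$), so that $M\setminus M_0$ is a union of closed codimension-$2$ submanifolds $\Sigma_i$. Since $M$ is simply connected, $\pi_1(M_0)$ is \emph{normally generated} by meridian loops $g_i$, the circle fibers of the punctured tubular neighborhoods $U_i\setminus P_i$ of the $\Sigma_i$. The key fact is that each such circle fiber is contained in a single leaf of $\mathcal F$: the infinitesimal foliation on the $2$-dimensional normal slice to a codimension-$2$ singular stratum is the foliation by concentric circles. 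Hence each $g_i$ satisfies criterion (3) of \lref{simplelift} with $U=U_i\setminus P_i$, so each $g_i$ (and therefore the normal subgroup it generates, and thus all of $\Gamma$) acts trivially on $B$, giving $M_0/\mathcal F_0=B$. Your alternative sufficient condition (surjectivity of $\pi_1(L)\to\pi_1(M_0)$ for leaves through one fixed open set) is stronger than needed and is never verified; the argument really does have to go generator by generator, using the leaves winding around the codimension-$2$ strata. The final deduction of simplicity from the isometry of quotients, as you state it, is fine.
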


\begin{proof}
 The assumptions and conclusions do not change if one deletes from $M$
all strata of codimension $\geq 3$.  Thus we may assume that
such strata do not exist.  
Then the complement $\Sigma  =M\setminus M_0$
is a disjoint union of closed submanifolds $\Sigma _i$ of codimension $2$.

 Choose a point $x_i$ on $\Sigma _i$, a small neighborhood $P_i$
of $x_i$ in $\Sigma _i$ and a small tubular neighborhood $U_i$
of $P_i$ in $M$.  Let $q:U_i \to P_i$ be the foot point projection.
The restriction of $q$ to $U_i \setminus P_i$ is a fiber bundle with circles
as fibers. By construction, each of these circles is contained
in a leaf of $\mathcal F$.

 On the other hand, all these circles are 
in the same free homotopy class $[g_i]$ of $U_i \setminus P_i$.  
Since $M$ is simply connected, the fundamental group $\Gamma$ of $M_0$ is 
generated by  conjugates of the elements $g_i$ (i.e., $\Gamma$ is normally
generated by the elements $g_i$).
Due to \lref{simplelift}, 
each of these free homotopy classes acts trivially on 
the Riemannian orbifold  $B=\tilde M_0 /\tilde {\mathcal F_0}$. Thus 
$\Gamma= \pi_1 (M_0)$ acts 
trivially on $B$ and we get $M_0 /\mathcal F_0 =B$.
This proves the theorem.
\end{proof}

 For a full Riemannian foliation $\mathcal F$ with closed leaves  one has an induced surjective homomorphism
 from $\pi _1 (M)$ onto $\pi_1 ^{orb} (B)$, the orbifold fundamental group of the quotient orbifold 
 $B$ (cf. \cite{Salem} or \cite{haefliger}).  Thus as a consequence of the above Proposition we deduce:
 
 \begin{cor} \label{pi1}
 Let $\mathcal F$ be a full singular Riemannian foliation on a simply connected Riemannian manifold $M$,
 with all leaves closed.
 Then the quotient $B_0$ of the restriction of $\mathcal F$ to the regular part $M_0$ is a Riemannian orbifold
 with $\pi _1 ^{orb} (B) =1$. 
 \end{cor}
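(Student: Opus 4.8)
The plan is to derive \cref{pi1} directly from the Proposition just proved together with the standard fact (cited from \cite{Salem}, \cite{haefliger}) that a full Riemannian foliation $\mathcal F$ with closed leaves on a connected manifold $N$ induces a surjective homomorphism $\pi_1(N)\twoheadrightarrow \pi_1^{orb}(N/\mathcal F)$. The key observation is that the Proposition lets us replace $B_0$, the quotient of the lift $\tilde{\mathcal F_0}$ on the universal cover $\tilde M_0$, by the genuine quotient $M_0/\mathcal F_0$: since $M$ is simply connected and $\mathcal F$ has closed leaves, the hypotheses of the Proposition are met (with $\tilde{\mathcal F_0}$ automatically closed because all leaves of $\mathcal F$, hence of $\mathcal F_0$, are closed — this uses fullness so that $M_0/\mathcal F_0$ is an orbifold), and we conclude $\mathcal F_0$ is closed with $\tilde M_0/\tilde{\mathcal F_0}\cong M_0/\mathcal F_0 =: B_0$, a Riemannian orbifold.

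First I would note that $\mathcal F_0$, being the restriction of a full foliation to the open saturated set $M_0$, is itself full, and its leaves are closed; by the structure theory of full Riemannian foliations with closed leaves, $B_0 = M_0/\mathcal F_0$ is a Riemannian orbifold. Then I would apply the Proposition to get the isometry $\tilde M_0/\tilde{\mathcal F_0}\to M_0/\mathcal F_0$, so that the orbifold denoted $B$ in the Proposition (the quotient of the lift) is canonically identified with $B_0$. Next I would invoke the surjection $\pi_1(M_0)\twoheadrightarrow \pi_1^{orb}(B_0)$. The final step is to show $\pi_1(M_0)$ already maps onto the trivial group, i.e.\ that the composite $\pi_1(M_0)\to \pi_1^{orb}(B_0)$ kills everything: but the proof of the Proposition showed exactly that $\Gamma = \pi_1(M_0)$ acts trivially on $B$, which — combined with the fact that $\pi_1^{orb}(B_0)$ is the deck group of the orbifold universal cover and the action of $\Gamma$ on $B = \tilde M_0/\tilde{\mathcal F_0}$ realizes the relevant part of this deck action — forces $\pi_1^{orb}(B_0) = 1$.

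The one point requiring a little care — and the step I expect to be the main obstacle — is pinning down precisely why "the image $\Gamma$ acts trivially on $\tilde M_0/\tilde{\mathcal F_0}$" translates into "$\pi_1^{orb}(B_0) = 1$" rather than merely "$\pi_1^{orb}(B_0)$ is a quotient of something smaller." The cleanest route is: the surjection $\pi_1(M_0)\twoheadrightarrow\pi_1^{orb}(B_0)$ from \cite{Salem}/\cite{haefliger} factors through the action of $\pi_1(M_0)$ on the quotient $\tilde M_0/\tilde{\mathcal F_0}$ (this is how that surjection is constructed — it records, for a loop in $M_0$, the deck transformation its lift induces on the leaf space of $\tilde{\mathcal F_0}$); hence if $\pi_1(M_0)$ acts trivially on $\tilde M_0/\tilde{\mathcal F_0}$, the surjection has trivial image, so $\pi_1^{orb}(B_0) = 1$. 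Since the Proposition's proof established precisely that $\Gamma = \pi_1(M_0)$ acts trivially on $B = \tilde M_0/\tilde{\mathcal F_0}$ (using that $\Gamma$ is normally generated by the loops $g_i$ around the codimension-$2$ strata and each such class acts trivially by \lref{simplelift}), the conclusion follows. I would keep this argument to two or three sentences, citing \cite{Salem} and \cite{haefliger} for the construction of the homomorphism and its compatibility with the deck action, and referring back to the proof of the Proposition for the triviality of the $\Gamma$-action.
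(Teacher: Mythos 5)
Your argument is correct and uses the same two ingredients as the paper --- the Proposition preceding the corollary and the Salem--Haefliger surjection from the fundamental group of the foliated manifold onto $\pi_1^{orb}$ of the quotient orbifold --- but you assemble them differently, and the difference is exactly at the point you flagged as delicate. The paper's deduction applies the surjection to the simply connected manifold $\tilde M_0$ carrying $\tilde{\mathcal F_0}$: this gives $\pi_1^{orb}(\tilde M_0/\tilde{\mathcal F_0})=1$ outright, and the isometry $\tilde M_0/\tilde{\mathcal F_0}\to M_0/\mathcal F_0$ from the \emph{statement} of the Proposition transports this to $B_0$; no appeal to the Proposition's proof and no claim about how the surjection is built are needed. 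You instead apply the surjection downstairs to $(M_0,\mathcal F_0)$ and then must show it factors through the $\pi_1(M_0)$-action on $\tilde M_0/\tilde{\mathcal F_0}$, killing the image via the triviality of that action established inside the Proposition's proof. That factoring is in fact true --- by \lref{simplelift} an element acting trivially on the lifted leaf space is conjugate into the image of $\pi_1(L)\to\pi_1(M_0)$ for a leaf $L$, and such classes die in $\pi_1^{orb}(B_0)$ by the exact sequence $\pi_1(L)\to\pi_1(M_0)\to\pi_1^{orb}(B_0)\to 1$ of \cite{Salem} --- so there is no gap, only a detour: the compatibility statement you invoke is essentially equivalent to the simpler fact the paper uses directly (orbifold simple connectivity of the quotient upstairs). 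One loose end common to both versions is that the Proposition requires $\tilde{\mathcal F_0}$ to be closed; this does follow from closedness of the leaves of $\mathcal F$ (a closed leaf of a Riemannian foliation is embedded, so the leaves of the lifted foliation are the connected components of the closed preimage, hence closed), but your parenthetical justification via ``fullness makes $M_0/\mathcal F_0$ an orbifold'' is not itself the reason; the paper glosses this point too and only touches it in the remark following the corollary via the pseudo-group formulation.
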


\begin{rem}
The above \lref{simplelift}  is true also
in the case of non-closed $\tilde {\mathcal F}$, as one sees by localizing
the arguments.  \cref{pi1} is also valid without the assumption that $\tilde {\mathcal F_0}$ is closed,
in the sense, that the fundamental group of the pseudo-group of isometries $M_0 /\mathcal F_0$ is simply connected,
cf. \cite{Salem}.
\end{rem}  
  
  We  are going to use two simple observations about orbifolds. First of all, an orbifold $B$ with $\pi_1 ^{orb}(B)=1$ is
  orientable. Hence it does not have strata of codimension $1$, i.e., $\partial B =\emptyset$.  On the other
 hand, any non-compact 2-dimensional orbifold is a good orbifold. Thus if $B$ is a non-compact two-dimensional orbifold
 with $\pi_1 ^{orb} (B) =1$ then $B$ is a manifold (nessesarily an open disc).

 Now we are going to provide:
 \begin{proof}[Proof of \tref{boundary}]
  Let $p\in B$ be  a  point representing a singular leaf $L$ of $\mathcal F$. Choose some  $x\in L$.
  Choose a small distinguished neighborhood
$U$ at the point $x$. Then the restriction of $\mathcal F$ to $U$
is given by a (restriction of a) non-trivial  isoparametric foliation on $\R ^n$, thus
$U/\mathcal F$ is a Weyl chamber. The embedding $U\to M$ induces 
a finite-to-one projection $U/\mathcal F \to B$. Moreover, this projection
is given by a finite isometric action of a group $\Gamma$ on $U/\mathcal F$  
(cf. \cite{expl}, p.7). Since the Weyl chamber has non-empty boundary, so does 
its finite quotient. Hence any neighborhood of $p$ contains boundary points.
Since the boundary is closed, $p\in \partial B$.

 Assume now that $M$ is simply connected. Denote by the orbifold $B_0 \subset B$ the quotient of the regular part 
 of $\mathcal F$. We have seen  in \cref{pi1}, that $B_0$ is simply connected as orbifold. Thus it cannot contain strata of codimension $1$.
 But $B_0$ is open thus, if it  has a point in $\partial B$, then it has a point lying on a stratum of codimension $1$ in $B$.
 Then the whole stratum is contained in $B_0$, contradiction.
 \end{proof}

 Now it is easy to obtain:
 \begin{proof}[Proof of \cref{cor2d}]
 Recall  that  $\mathcal F$ is infinitesimally polar, since $B$ has dimension $2$ (\cite{expl}).
 Assume that the foliation is not regular. Then the quotient has non-empty boundary,
 by \tref{boundary}. Therefore, the complement of the boundary $B_0= B\setminus \partial B$ 
 (the quotient orbifold of the regular  part)  is not compact. But its orbifold
 fundamental group is trivial by \cref{pi1}. Since it is a 2-dimensional orbifold, it must be a manifold.  
 Thus there are no exceptional orbits.
 \end{proof}

    Now we are going to provide:
\begin{proof}[Proof of \tref{goodorb}]
 The equivalence of $(1)$, $(2)$ and $(4)$ has already been established (\tref{topothm} and \tref{boundary}).
 By definition of a Coxeter orbifold, $(3)$ implies $(4)$.

Now assume $(1)$. Take a point $p\in B$. As we have seen in the proof of  the first part of \tref{boundary},
there is a Weyl chamber $W$ with a Riemannian metric (a local quotient at a point $x\in M$ over $p$ and an action of a finite group $\Gamma$ on $W$ by isometries, such that the quotient $W/\Gamma$ is isometric to an open neighborhood of $p$.
Note that the set of regular points $W_0$  in $W$ is projected to $B_0$, the set of regular leaves. The assumption, that there are no exceptional orbits, i.e., that $B_0$ and therefore $W_0 /\Gamma$   is a Riemannian manifold, implies that the action of $\Gamma$ on the regular
part of $W$ is free. But the regular part $W_0$  of $W$ is contractible! Hence the finite group $\Gamma$ must act trivially on $W_0$
(since $\Gamma$ it has infinite cohomological dimension). Thus $\Gamma$ acts trivially on $W$. Therefore, a neighborhood of $p$
isometric to $W$. Thus $B$ is a Coxeter orbifold. 
\end{proof}

\end{document}